\newtheorem{theorem}{Theorem}[section]
\newtheorem{proposition}[theorem]{Proposition}
\newtheorem{lemma}[theorem]{Lemma}
\newtheorem{corollary}[theorem]{Corollary}
\theoremstyle{definition}
\newcommand{\what}[1]{\widehat{#1}}
\newcommand{\til}[1]{\tilde{#1}}
\newcommand{\id}{\operatorname{id}}
\newcommand{\norm}[1]{\left\Vert#1\right\Vert}
\newcommand{\pcbnorm}[1]{\left\Vert#1\right\Vert_{p\mathrm{cb}}}
\newcommand{\fA}{\mathcal{A}}
\newcommand{\fB}{\mathcal{B}}
\newcommand{\fC}{\mathcal{C}}
\newcommand{\fF}{\mathcal{F}}
\newcommand{\fN}{\mathcal{N}}
\newcommand{\fQ}{\mathcal{Q}}
\newcommand{\fS}{\mathcal{S}}
\newcommand{\fV}{\mathcal{V}}
\newcommand{\fW}{\mathcal{W}}
\newcommand{\fX}{\mathcal{X}}
\newcommand{\fY}{\mathcal{Y}}
\newcommand{\fZ}{\mathcal{Z}}
\newcommand{\Cee}{\mathbb{C}}
\newcommand{\En}{\mathbb{N}}
\newcommand{\alp}{\alpha}
\newcommand{\eps}{\varepsilon}
\newcommand{\gam}{\gamma}
\newcommand{\lam}{\lambda}
\newcommand{\ome}{\omega}
\newcommand{\Ome}{\Omega}
\newcommand{\mat}{\mathbb{M}}
\begin{document}

\title[Min.\ and max.\ $p$-operator spaces]
{On minimal and maximal $p$-operator space structures}

\author{Serap \"{O}ztop and Nico Spronk}

\begin{abstract}
We show that $L^\infty(\mu)$, in its capacity as multiplication operators on $L^p(\mu)$,
is minimal as a $p$-operator space for a decomposable measure $\mu$.  
We conclude that $L^1(\mu)$ has a certain maximal type $p$-operator space structure
which facilitates computations with $L^1(\mu)$ and the projective tensor product.
\end{abstract}

\maketitle

\footnote{{\it Date}: \today.

2000 {\it Mathematics Subject Classification.} Primary 46L07;
Secondary 47L25, 46G10.
{\it Key words and phrases.} $p$-operator space, min space, max space.

The first named author would like to thank the Scientific Projects Coordination Unit
of the Istanbul University, IRP 11488,  and the 
University of Waterloo for hosting her visit fromApril 2011 to June 2012.  
The second named author would
like thank NSERC Grant 312515-2010.}



In the theory of operator spaces, there are extremal operator space structures which
can be assigned to any Banach space.  These arose in the papers \cite{blecherpaulsen,effrosruan1}
and are exposed in the monograph \cite{effrosruan}.  They have particular
value when understanding mappings and tensor products.

In the present article we examine minimal and maximal $p$-operator space structures.
These structures' existences were noted in \cite{junge}, where they were used
to characterise certain algebras as algebras of operators on $\fS\fQ_p$-spaces.
Our primary motivation is to gain the isometric tensor product formula 
$L^1(\mu)\what{\otimes}^p\fV\cong L^1(\mu,\fV)$ for the $p$-operator projective tensor
product of \cite{daws}.   Here $L^1(\mu)$ has
a certain maximal operator space structure, which appears naturally via the embedding
of $L^1(\mu)\hookrightarrow L^\infty(\mu)^*$, where $L^\infty(\mu)$ acts on
$L^p(\mu)$ as multiplication operators.  This is a less obvious task then we had initially
hoped, and seems worth an exposition in its own right.  
The techniques of this article are all classical and elementary.

\subsection{Background}
Let $1<p<\infty$, and $p'$ denote the conjugate index so $\frac{1}{p}+\frac{1}{p'}=1$.  
The theory of $p$-operator spaces is designed to give an analogue
to the theory of operator spaces on a Hilbert space, which we might call $2$-operator spaces.
The theory of $p$-operator spaces has its origins in \cite{pisier,lemerdy}, and was studied extensively
in \cite{junge}.  Daws~(\cite{daws}) presents these spaces in the format we are using, a format also
used extensively by An, Lee and Ruan~(\cite{anleeruan}).  We closely follow the presentation of
\cite{daws} and use some concepts from \cite{anleeruan}.

We let $\ell^p(n)$ denote $\Cee^n$ with the $\ell^p$-norm.
Given a Banach space $\fV$, a $p$-operator space structure on $\fV$ is a sequence
of norms $\norm{\cdot}_n$, each norm on $n\times n$-matricies with entries in
$\fV$, which satisfy the axioms below.

$(D_\infty)$ \phantom{m} \parbox[t]{4.3in}{For $u$ in $\mat_n(\fV)$ and $v$ in $\mat_m(\fV)$,
$\norm{u\oplus v}_{n+m}=\max\{\norm{u}_n,\norm{v}_m\}$.}

$(M_p)$ \phantom{mj} \parbox[t]{4.3in}{For $u$ in $\mat_n(\fV)$ and
$\alp,\beta$ in $\mat_n\cong\fB(\ell^p(n))$, $\norm{\alp u\beta}_n\leq
\norm{\alp}_{\fB(\ell^p(n))}\norm{u}_n\norm{\beta}_{\fB(\ell^p(n))}$.}

\noindent A Banach space $\fV$, equipped with a sequence of norms as above,
will be called a {\em $p$-operator space}.  In the sequel we will drop the subscript
$n$ from the norm on $\mat_n(\fV)$.  A linear map $T:\fV\to\fW$ gives rise to 
amplifications $T^{(n)}:\mat_n(\fV)\to\mat_n(\fW)$, $T^{(n)}[v_{ij}]=[Tv_{ij}]$.
Such a map is called {\em completely bounded} if $\pcbnorm{T}
=\sup_n\norm{T^{(n)}}<\infty$.  Moreover it is called {\em completely contractive}
if $\pcbnorm{T}\leq 1$ and a {\em complete isometry} if each $T^{(n)}$ is an isometry.
The space of such maps will be denoted $\fC\fB_p(\fV,\fW)$.

We say a Banach space $E$ is in the class $\fS\fQ_p$ if it is a quotient of a subspace of
$L^p(\phi)$ for some measure $\phi$.  The space $\fB(E)$ is a $p$-operator
space given identifications $\mat_n(\fB(E))\cong\fB(\ell^p(n)\otimes^pE)
\cong\fB(\ell^p(n,E))$.  Here $L^p(\phi)\otimes^pE$ is the completion with respect to the norm 
given by embedding $L^p(\phi)\otimes E\hookrightarrow L^p(\phi,E)$.  Moreover,
any $p$-operator space admits a complete isometry into $\fB(E)$ for some $E$
in $\fS\fQ_p$ (\cite{pisier,lemerdy}).  Spaces which admit complete isometries
into $\fB(L^p(\phi))$ will admit better properies than general $p$-operator spaces.
We will follow \cite{anleeruan} and say that such spaces {\em act on (some) $L^p$}.

We follow \cite{daws} on assigning $p$-operator space structures to mapping spaces.
We identify $\mat_n(\fC\fB_p(\fV,\fW))\cong\fC\fB_p(\fV,\mat_n(\fW))$, where
$\mat_n(\fW)$ is a $p$-operator space via the identifications $\mat_m(\mat_n(\fW))
\cong\mat_{mn}(\fW)$.  In particular, for the dual space, $\mat_n(\fV^*)
\cong\fC\fB_p(\fV,\fB(\ell^p(n))$, completely isometrically.  
We have $p$-version of the projective tensor product
$\otimes^\gam$ and the injective tensor product $\otimes^\lambda$; namely
the {\em $p$-projective tensor product} $\what{\otimes}^p$ of \cite{daws} and
the {\em $p$-injective tensor product} $\check{\otimes}^p$ of \cite{anleeruan}.
The $p$-projective tensor product enjoys all of the usual functorial properties
which are analogues of $\otimes^\gam$, while the theory of $\check{\otimes}^p$
is not as well understood.  However, we do have that 
$\mat_n(\fV)\cong\fV\check{\otimes}^p\fB(\ell^p(n))$ completely isometrically.

As observed in \cite[p.\ 89]{junge}, 
for a $p$-operator space $\fV$, the algebraic idenification 
$\fV\otimes\fB(\ell^p(n))\cong\mat_n(\fV)$
allows us to view $\norm{\cdot}_n$ as a reasonable cross-norm on $\fV\otimes\fB(\ell^p(n))$; see the
terminology in \cite{ryan}, for example.  Indeed,  an application of ($M_p$) then of ($D_\infty$) 
shows that $\norm{[\alp_{ij} v]}_n\leq\norm{\alp}_{\fB(\ell^p(n))}\norm{v}$ for $\alp$ in $\mat_n$
and $v$ in $\fV$; while \cite[Lem.\ 4.2]{daws} --- that contractive linear functions are
automatically completely contractive --- shows that $|\varphi\otimes\psi(v)|\leq
\norm{\varphi}_{\fV^*}\norm{\psi} _{\fB(\ell^p(n))^*}\norm{v}_n$ for any $\varphi$ and $\psi$
where $v\in\fV\otimes\fB(\ell^p(n))$.
Moroever, if $\fX$ is any Banach space, then the algebraic identifications
\begin{equation}\label{eq:minstructure}
\mat_n(\fX)\cong\fX\otimes^\lam\fB(\ell^p(n))
\end{equation}
(injective tensor product on the right),
are easily verified to produce a $p$-operator space structure on $\fX$ which is minimal
in the sense that $\norm{\cdot}_n\leq\norm{\cdot}_n'$ (for each $n$) with any other operator space
structure on $\fX$.  We call this operator space structure the {\em minimimal $p$-operator
space} structure on $\fX$.  If $\fV$ is an operator space and $T:\fV\to\fX$ is bounded then
$T$ is completley bounded with $\pcbnorm{T}=\norm{T}$. Then, by uniformity of the injective 
tensor product, we see that
\[
T^{(n)}\cong (T\otimes\id)\circ\iota_n:\fV\check{\otimes}^p\fB(\ell^p(n))\to
\fV\otimes^\lam\fB(\ell^p(n))\to\fX\otimes^\lam\fB(\ell^p(n))
\]
is bounded with norm at most $\norm{T}$, where $\iota_n$ is the identity on
$\fV\otimes\fB(\ell^p(n))$, which is a contraction as $\check{\otimes}^p$ gives a reasonable 
cross-norm.  We call any $p$-operator space $\fV$ whose 
$p$-operator structure is the minimal one, i.e.\ $\fV=\min\fV$ completely isometrically,
a {\em minimal $p$-operator space}.  

\begin{proposition}\label{prop:minspace}
The following are equivalent for a $p$-operator space $\fV$:

{\bf (i)} $\fV$ is minimal;

{\bf (ii)} for any $p$-operator space $\fW$, $\fC\fB_p(\fW,\fV)=\fB(\fW,\fV)$ isometrically;

{\bf (iii)} for any $p$-operator space $\fW$, $\fW\check{\otimes}^p\fV=\fW\otimes^\lambda\fV$.
\end{proposition}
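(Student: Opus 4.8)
The plan is to run the cycle $\text{(i)}\Rightarrow\text{(ii)}\Rightarrow\text{(iii)}\Rightarrow\text{(i)}$. Two inequalities will be used for free throughout: $\norm{T}\leq\pcbnorm{T}$ for every $T$, and, since $\check{\otimes}^p$ is a reasonable cross-norm while $\otimes^\lambda$ is the least reasonable cross-norm, $\norm{u}_{\otimes^\lambda}\leq\norm{u}_{\check{\otimes}^p}$ for every $u\in\fW\otimes\fV$.

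The implication $\text{(i)}\Rightarrow\text{(ii)}$ is essentially recorded in the paragraph preceding the statement: if $\fV=\min\fV$, then any bounded $T:\fW\to\fV$ is completely bounded with $\pcbnorm{T}=\norm{T}$. Since the inclusion $\fC\fB_p(\fW,\fV)\subseteq\fB(\fW,\fV)$ is contractive, this is exactly the isometric identity in (ii). For $\text{(iii)}\Rightarrow\text{(i)}$ I would specialize to $\fW=\fB(\ell^p(n))$: by commutativity of the two tensor products, together with the identification $\mat_n(\fV)\cong\fV\check{\otimes}^p\fB(\ell^p(n))$ and the defining identification $\mat_n(\min\fV)\cong\fV\otimes^\lambda\fB(\ell^p(n))$ of \eqref{eq:minstructure}, hypothesis (iii) forces $\norm{\cdot}_n=\norm{\cdot}_n^{\min}$ for every $n$, which is minimality.

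The substance is $\text{(ii)}\Rightarrow\text{(iii)}$, which I would prove by realizing both tensor norms as operator norms. For $u=\sum_i w_i\otimes v_i$ in the algebraic tensor product $\fW\otimes\fV$, let $T_u:\fW^*\to\fV$ be the finite-rank map $T_u(\psi)=\sum_i\psi(w_i)v_i$. The standard Banach-space fact gives $\norm{u}_{\otimes^\lambda}=\norm{T_u}_{\fB(\fW^*,\fV)}$; the key input is the corresponding $p$-operator identity $\norm{u}_{\check{\otimes}^p}=\pcbnorm{T_u}$, coming from the completely isometric embedding $\fW\check{\otimes}^p\fV\hookrightarrow\fC\fB_p(\fW^*,\fV)$. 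Granting this, I apply the hypothesis (ii) to the $p$-operator space $\fW^*$ to get $\fC\fB_p(\fW^*,\fV)=\fB(\fW^*,\fV)$ isometrically, so $\pcbnorm{T_u}=\norm{T_u}$; hence the two tensor norms agree on $\fW\otimes\fV$, and passing to completions gives (iii).

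The main obstacle is exactly this cb-realization $\norm{u}_{\check{\otimes}^p}=\pcbnorm{T_u}$, which I would draw from the treatment of $\check{\otimes}^p$ in \cite{anleeruan}. Should it not be available in the required form, I would argue directly from a one-sided slice description $\norm{u}_{\check{\otimes}^p}=\sup_m\sup\{\norm{(f\otimes\id)(u)}_{\mat_m(\fV)}:f\in\fC\fB_p(\fW,\mat_m),\;\pcbnorm{f}\leq1\}$. Since $\fV$ is minimal, $\mat_m(\fV)=\fV\otimes^\lambda\mat_m$; pairing $(f\otimes\id)(u)$ against a contractive $\psi\in\fV^*$ and a contractive functional $h$ on $\mat_m$ rewrites the pairing as $\langle u,(h\circ f)\otimes\psi\rangle$, where $h\circ f$ is a contraction in $\fW^*$ because functionals are automatically completely contractive (\cite[Lem.\ 4.2]{daws}) and $f$ is a complete contraction. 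This bounds $\norm{(f\otimes\id)(u)}_{\mat_m(\fV)}$ by $\norm{u}_{\otimes^\lambda}$, giving $\norm{u}_{\check{\otimes}^p}\leq\norm{u}_{\otimes^\lambda}$ and hence equality.
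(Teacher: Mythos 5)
Your proposal is correct, and its two substantive implications are the same as in the paper's own proof: your (i)$\Rightarrow$(ii) is exactly the observation preceding the statement, and your (ii)$\Rightarrow$(iii) --- realize the $p$-injective norm of $u$ as $\pcbnorm{T_u}$ via the embedding $\fW\check{\otimes}^p\fV\hookrightarrow\fC\fB_p(\fW^*,\fV)$, realize the injective norm as $\norm{T_u}$ via $\fW\otimes\fV\hookrightarrow\fB(\fW^*,\fV)$, then apply hypothesis (ii) to the $p$-operator space $\fW^*$ --- is precisely the paper's argument; the ``key input'' you were unsure of is indeed the definition of the $p$-injective tensor product in \cite[\S 3]{anleeruan}, so your fallback is unnecessary. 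Where you genuinely differ is in how the equivalence is closed. The paper proves (ii)$\Rightarrow$(i) directly: applying (ii) with $\fW=\min\fV$ shows that $\id:\min\fV\to\fV$ is completely contractive, and the reverse map is automatically so. You instead prove (iii)$\Rightarrow$(i), taking $\fW=\fB(\ell^p(n))$ and comparing $\mat_n(\fV)\cong\fV\check{\otimes}^p\fB(\ell^p(n))$ with $\mat_n(\min\fV)\cong\fV\otimes^\lam\fB(\ell^p(n))$ from (\ref{eq:minstructure}); this is valid (the ``commutativity'' of $\check{\otimes}^p$ you invoke holds because the An--Lee--Ruan norm is defined by symmetric two-sided slices), and it buys something the printed proof lacks: the paper, as written, establishes (i)$\Leftrightarrow$(ii) and (ii)$\Rightarrow$(iii) but never derives any implication out of (iii), so your cycle (i)$\Rightarrow$(ii)$\Rightarrow$(iii)$\Rightarrow$(i) is logically complete where the paper's proof is not.

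One caution about your fallback argument for (ii)$\Rightarrow$(iii): it begins ``since $\fV$ is minimal,'' i.e.\ it assumes (i), which is not available at that stage of your cycle --- in proving (ii)$\Rightarrow$(iii) you may only use (ii). The circle is easily repaired by first recording the cheap implication (ii)$\Rightarrow$(i) as above, but as stated the fallback is circular. Since your primary argument for (ii)$\Rightarrow$(iii) does not rely on it, the proposal stands.
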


\begin{proof}
Since $\fC\fB_p(\fW,\fV)\subset\fB(\fW,\fV)$ contractivley, the observation above gives
that (i) implies (ii).  Condition (ii) implies that
$\id:\min\fV\to\fV$ is completley contractive.  Since the converse is automatic, (i) holds.

If (ii) holds then $\fC\fB_p(\fW^*,\fV)=\fB(\fW^*,\fV)$ isometrically.  Thus,
by virtue of the definition of the $p$-operator injective tensor product
(\cite[\S 3]{anleeruan}) and the well known injection $\fW\otimes\fV\hookrightarrow
\fB(\fW^*,\fV)$, the $p$-operator injective and injective tensor norms agree on $\fW\otimes\fV$.
\end{proof}

\noindent The definition of maximal $p$-operator space will be given in Section \ref{sec:maximal}.

The following rudimentary fact will be referred to a couple of times in the sequel,
and is an obvious consequence of the density of simple functions in $L^{p'}(\phi)$
and duality.

\begin{lemma}\label{lem:ellpinLp}
For any finite subset $F\subset L^p(\phi)$ and $\eps>0$, there is an $m$ in $\En$ and a 
contraction $V:L^p(\phi)\to \ell^p(m)$
for which $(1-\eps)\norm{f}_{L^p}\leq\norm{Vf}_{\ell^p}\leq(1+\eps)\norm{f}_{L^p}$
for $f$ in $F$.
\end{lemma}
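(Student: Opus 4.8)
The plan is to realize $V$ as a suitably normalized averaging (conditional-expectation) map onto a finite partition, where the partition is chosen fine enough to carry norming functionals for the finitely many elements of $F$. First I would invoke duality together with the stated density of simple functions: for each nonzero $f$ in $F$, since $\norm{f}_{L^p}=\sup\{|\int fh\,d\phi|:\norm{h}_{L^{p'}}\leq 1\}$ and simple functions are dense in $L^{p'}(\phi)$, I can pick a \emph{simple} $h_f$ in $L^{p'}(\phi)$ with $\norm{h_f}_{L^{p'}}\leq 1$ and $|\int f h_f\,d\phi|\geq(1-\eps)\norm{f}_{L^p}$. Each such $h_f$ is supported on a set of finite measure and takes finitely many values, so after discarding null sets I obtain a single finite partition $\{A_1,\dots,A_m\}$ of a finite-measure set $S$, with every $\phi(A_j)$ finite and positive, on whose atoms each $h_f$ is constant; write $h_f=\sum_{j=1}^m c_j^{(f)}\mathbf{1}_{A_j}$.

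With this partition fixed I would define $V:L^p(\phi)\to\ell^p(m)$ by
\[
(Vf)_j=\phi(A_j)^{-1/p'}\int_{A_j}f\,d\phi,\qquad j=1,\dots,m,
\]
the integrals converging by H\"older's inequality since each $A_j$ has finite measure. To check that $V$ is a contraction I would apply H\"older on each atom, $|\int_{A_j}f\,d\phi|^p\leq\bigl(\int_{A_j}|f|^p\,d\phi\bigr)\phi(A_j)^{p/p'}$, and use $p/p'=p-1$ to get $\norm{Vf}_{\ell^p}^p=\sum_j|(Vf)_j|^p\leq\sum_j\int_{A_j}|f|^p\,d\phi\leq\norm{f}_{L^p}^p$.

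For the lower bound on $F$ I would exploit that $h_f$ is constant on the atoms, so
\[
\int f h_f\,d\phi=\sum_{j=1}^m c_j^{(f)}\int_{A_j}f\,d\phi=\sum_{j=1}^m\bigl(c_j^{(f)}\phi(A_j)^{1/p'}\bigr)(Vf)_j.
\]
H\"older's inequality for $\ell^p(m)$ and $\ell^{p'}(m)$ bounds the right-hand side by $\norm{Vf}_{\ell^p}\bigl(\sum_j|c_j^{(f)}|^{p'}\phi(A_j)\bigr)^{1/p'}=\norm{Vf}_{\ell^p}\norm{h_f}_{L^{p'}}\leq\norm{Vf}_{\ell^p}$. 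Hence $\norm{Vf}_{\ell^p}\geq|\int f h_f\,d\phi|\geq(1-\eps)\norm{f}_{L^p}$, and together with the contraction estimate this in fact yields the conclusion with upper constant $1$ in place of $1+\eps$.

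I do not anticipate a genuine obstacle; the argument is elementary, as advertised. The only two points requiring a little care are the normalization $\phi(A_j)^{-1/p'}$, which is precisely what makes $V$ simultaneously a contraction and the isometric image of the norming pairing against the $\mathcal{P}$-measurable $h_f$, and the bookkeeping of restricting to the finite-measure atoms supporting the $h_f$, so that an infinite total mass of $\phi$ never enters the estimates.
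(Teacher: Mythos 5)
Your proof is correct and follows exactly the route the paper indicates: the paper offers no written proof, stating only that the lemma is ``an obvious consequence of the density of simple functions in $L^{p'}(\phi)$ and duality,'' which is precisely your construction of norming simple functionals $h_f$, a common finite partition of finite-measure atoms, and the normalized averaging map $V$ (a conditional expectation identified isometrically with $\ell^p(m)$). Your two H\"older estimates correctly yield the contraction property and the lower bound, in fact giving the conclusion with upper constant $1$ rather than $1+\eps$.
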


\section{On minimal $p$-operator spaces}

In the theory of $2$-operator spaces, a special role is played by
commutative C*-algebras and completely isometric copies of their subspaces.  
These are the {\em minimal operator spaces}.  Classical theory tells us that
any representation of a commutative C*-algebra $\fA\cong\fC_0(\Ome)$
on a Hilbert space can be realised as a direct sum of representations on cylic subspaces,
where each, in turn, produces a Radon measure $\nu$ on $\Ome$ by which the representation
is unitarily equivalent to a representation by multiplication operators on $L^2(\nu)$.
We are not aware of any analogue of this result for representation on $\fS\fQ_p$-spaces,
or even $L^p$-spaces.  This reduces us to studying representations which are already
multiplication representations on $L^p$-spaces.  This gives rise to a 
more robust theory than might be anticipated.


\subsection{On the space of continuous functions as a minimal $p$-operator space}
We begin with the continuous bounded functions $\fC_b(\Ome)$ on a locally compact space
$\Ome$.   In this case a familiar formula for the injective tensor product gives for each $n$
an isometric identification
\begin{equation}\label{eq:cbf}
\mat_n(\min\fC_b(\Ome))\cong \fC_b(\Ome,\fB(\ell^p(n)),\quad
[f_{ij}]\mapsto(\ome\mapsto[f_{ij}(\ome)]).
\end{equation}
Indeed, the Stone-\v{C}ech compactification satisfies
$\fC(\beta\Ome,M)\cong\fC_b(\Ome,M)$ for any finite dimensional Banach space $M$.
We let $\nu$ be a Radon measure on $\Ome$ and $M_\nu:\fC_b(\Ome)\to\fB(L^p(\nu))$ be the
contractive injection given by 
\[
M_\nu(f)\xi(\ome)=f(\ome)\xi(\ome)
\]
for $\nu$-a.e.\ $\ome$.  We say $\nu$ if faithful is $\nu(U)>0$ for any open set $U$.
If $\nu$ is faithful then $M_\nu$ is an isometry.  

The next simple result is required for the next section.  The result seems like
it ought to hold for more general $L^\infty$-spaces, except for a certain
localisation of norm arguement at the end of the proof.

\begin{proposition}\label{prop:cbf}
Given a faithful Radon measure $\nu$ on $\Ome$, $M_\nu:\min\fC_b(\Ome)\to
\fB(L^p(\nu))$ is a complete isometry.
\end{proposition}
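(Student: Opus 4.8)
The plan is to verify directly that each amplification $M_\nu^{(n)}$ is an isometry, working through the identification \eqref{eq:cbf}. Under that identification the norm of $[f_{ij}]$ in $\mat_n(\min\fC_b(\Ome))$ is the fibrewise supremum $\sup_{\ome\in\Ome}\norm{[f_{ij}(\ome)]}_{\fB(\ell^p(n))}$. On the operator side, using $\mat_n(\fB(L^p(\nu)))\cong\fB(\ell^p(n,L^p(\nu)))$ and the isometric identification of $\ell^p(n,L^p(\nu))$ with $L^p(\nu,\ell^p(n))$ (an element being a section $\ome\mapsto(\xi_1(\ome),\dots,\xi_n(\ome))\in\ell^p(n)$), the operator $M_\nu^{(n)}[f_{ij}]$ becomes the pointwise multiplier $\xi\mapsto\bigl(\ome\mapsto[f_{ij}(\ome)]\,(\xi_j(\ome))_j\bigr)$. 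Thus the whole statement reduces to comparing $\norm{M_\nu^{(n)}[f_{ij}]}$ with the fibrewise supremum, and it suffices to prove the two inequalities.

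For the upper bound (which is just the complete contractivity of $M_\nu$) I would fix a section $\xi$ and estimate fibrewise: for $\nu$-a.e.\ $\ome$ the image fibre vector has $\ell^p(n)$-norm at most $\norm{[f_{ij}(\ome)]}_{\fB(\ell^p(n))}\,\norm{(\xi_j(\ome))_j}_{\ell^p(n)}\leq M\,\norm{(\xi_j(\ome))_j}_{\ell^p(n)}$, where $M=\sup_\ome\norm{[f_{ij}(\ome)]}_{\fB(\ell^p(n))}$. Raising to the $p$-th power and integrating over $\ome$ gives $\norm{M_\nu^{(n)}[f_{ij}]\,\xi}\leq M\norm{\xi}$, whence $\norm{M_\nu^{(n)}[f_{ij}]}\leq M$.

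The lower bound is where the real content lies, and it is precisely the ``localisation of norm'' step flagged before the statement. Fix $\ome_0\in\Ome$ and $\eps>0$, and choose a unit vector $a\in\ell^p(n)$ with $\norm{[f_{ij}(\ome_0)]a}_{\ell^p(n)}>\norm{[f_{ij}(\ome_0)]}_{\fB(\ell^p(n))}-\eps$. By continuity of each $f_{ij}$ there is an open neighbourhood $U$ of $\ome_0$ on which $\norm{[f_{ij}(\ome)]-[f_{ij}(\ome_0)]}_{\fB(\ell^p(n))}<\eps$. This is exactly where faithfulness and the Radon property of $\nu$ enter: $\nu(U)>0$, and by inner regularity there is a compact $K\subset U$ with $0<\nu(K)<\infty$. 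I then test $M_\nu^{(n)}[f_{ij}]$ against the unit section $\xi$ given by $\xi_j=\nu(K)^{-1/p}a_j\mathbf{1}_K$; a pointwise estimate on $K$ gives $\norm{[f_{ij}(\ome)]a}_{\ell^p(n)}>\norm{[f_{ij}(\ome_0)]}_{\fB(\ell^p(n))}-2\eps$ there, and integrating yields $\norm{M_\nu^{(n)}[f_{ij}]\,\xi}>\norm{[f_{ij}(\ome_0)]}_{\fB(\ell^p(n))}-2\eps$. Letting $\eps\to0$ and then taking the supremum over $\ome_0$ gives $\norm{M_\nu^{(n)}[f_{ij}]}\geq\sup_\ome\norm{[f_{ij}(\ome)]}_{\fB(\ell^p(n))}$, which together with the upper bound forces equality for every $n$, hence complete isometry.

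I expect the delicate point to be exactly this production of a genuine open set of positive (and finite) measure on which the continuous matrix symbol is nearly constant. It is what simultaneously uses continuity of the $f_{ij}$, faithfulness of $\nu$, and inner regularity, and it is what would fail for an arbitrary $L^\infty(\mu)$: there the fibre norm is only an essential supremum, so there is no honest point $\ome_0$ around which to localise, and the lower-bound argument collapses.
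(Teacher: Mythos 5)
Your proof is correct and takes essentially the same route as the paper's: complete contractivity via the fibrewise estimate followed by integration, and the reverse inequality by exactly the localisation-of-norm argument, testing against the unit section $\nu(K)^{-1/p}1_K(\cdot)a$ supported on a compact set of positive finite measure where the symbol is within $\eps$ of its value at $\ome_0$. The only cosmetic differences are that the paper produces $K$ as a compact neighbourhood of $\ome_0$ (using local compactness of $\Ome$) where you invoke inner regularity of the Radon measure, and it chooses a norm-attaining unit vector at a near-optimal $\ome_0$ rather than taking the supremum over $\ome_0$ at the end.
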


\begin{proof}
It suffices to verify that each amplification $M_\nu^{(n)}$ is an isometry.
We identify $\mat_n(\fB(L^p(\nu)))\cong
\fB(L^p(\nu,\ell^p(n)))$.  We observe, under this identification, that
$M_\nu^{(n)}(F)\xi(\ome)=F(\ome)\xi(\ome)$,
for $F$ in $\fC_b(\Ome,\fB(\ell^p(n)))$, $\xi$ in $L^p(\nu,\ell^p(n))$
and $\nu$-a.e.\ $\ome$.  We compute
\begin{align*}
\norm{M_\nu^{(n)}(F)\xi}_{L^p(\nu,\ell^p)}
&=\left(\int_\Ome\norm{F(\ome)\xi(\ome)}_{\ell^p}^p\,d\nu(\ome)\right)^{1/p} \\
&\leq \left(\int_\Ome\norm{F(\ome)}_{\fB(\ell^p)}^p
\norm{\xi(\ome)}_{\ell^p}^p\,d\nu(\ome)\right)^{1/p} \\
&\leq \norm{F}_{\fC_b(\Ome,\fB(\ell^p))}\norm{\xi}_{L^p(\nu,\ell^p)}
\end{align*}
Thus $M^{(n)}_\nu$ is a contraction.

Conversely, given $\eps>0$, find $\ome_0$ for which
$\norm{F(\ome_0)}_{\fB(\ell^p)}>\norm{F}_{\fC_b(\Ome,\fB(\ell^p))}-\eps$,
and then $\xi_0$ in $\ell^p(n)$ with $\norm{\xi_0}_{\ell^p}=1$ and
for which $\norm{F(\ome_0)\xi_0}_{\ell^p}
=\norm{F(\ome_0)}_{\fB(\ell^p)}$.  Find a compact
neighbourhood $K$ of $\ome_0$
such that $\norm{F(\ome)-F(\ome_0)}_{\fB(\ell^p)}<\eps$ for $\ome$ in $K$.
(This is the ``localisation of norm argument" to which we alluded, above.)
Then $\xi=\nu(K)^{-1/p}1_K(\cdot)\xi_0$
in $L^p(\nu,\ell^p(n))$ is of norm $1$ and
satisfies 
\[
\norm{M^{(n)}_\nu(F)\xi-F(\ome_0)\xi}_{L^p(\nu,\ell^p)}<\eps.
\]
It is immediate that $M^{(n)}_\nu$ is an isometry.
\end{proof}

Of course, the above result applies to $\ell^\infty(\Ome)$ for any set $\Ome$.
Let $\fX$ be a Banach space.  We let 
$\Ome$ denote any subset of the unit ball of $\fX^*$
which is norming for $\fX$, and consider the
isometric embedding
\begin{equation}\label{eq:minembedding}
\fX\hookrightarrow\ell^\infty(\Ome),\; x\mapsto(\ome\mapsto\ome(x)).
\end{equation}
As already observed in \cite{junge}, this is a complete isometry of minimal spaces,
hence $\min\fX$ acts on $L^p$.

\subsection{$L^\infty$ as a minimal $p$-operator space}
We show that for a suitable measure $\mu$, $L^\infty(\mu)$ attains it minimal
$p$-operator space structure as multiplication operators on $L^p(\mu)$.

We say a measure $\mu$ is decomposable if we can write
$\mu=\sum_{\iota\in I}\mu_\iota$ where each
$\mu_\iota$ is finite, and $\mu_\iota$ and $\mu_{\iota'}$ are mutually
singular for distinct indicies.  For such measures, we have the duality $L^1(\mu)^*\cong
L^\infty(\mu)$, provided we define $L^\infty(\mu)$ to be certain equivalence classes
of locally essentially bounded functions; see \cite[p.\ 192]{folland}.
{\em We will hereafter assume $\mu$ is a
decomposable measure.}

We require a certain $p$-analogue
of a familiar result in representation theory of commutative C*-algebras
holds;  see \cite[II.1.1]{davidson}, for example, whose standard proof we modify.
We let $M_\mu:L^\infty(\mu)\to
\fB(L^p(\mu))$ be the representation given by multiplication operators.

\begin{lemma}\label{lem:p-rep}
There is a locally compact space $\Ome$
such that $L^\infty(\mu)\cong\fC_b(\Ome)$ via a $*$-algebra isomorphism $f\mapsto\hat{f}$, 
a faithful Radon measure $\nu$ on $\Ome$, and a surjective isometry $U:L^p(\nu)\to L^p(\mu)$
such that $UM_\nu(\hat{f})=M_\mu(f)U$.
\end{lemma}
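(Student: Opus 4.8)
The plan is to reduce to the case of a finite measure via decomposability, treat that case by Gelfand theory, and then reassemble. Writing $\mu=\sum_{\iota\in I}\mu_\iota$ with the $\mu_\iota$ finite and mutually singular, we may arrange supports $X_\iota$ that are pairwise disjoint. Decomposability then yields the isometric identifications $L^\infty(\mu)\cong\ell^\infty\text{-}\bigoplus_\iota L^\infty(\mu_\iota)$ and $L^p(\mu)\cong\ell^p\text{-}\bigoplus_\iota L^p(\mu_\iota)$, and $M_\mu$ respects this block structure. Hence it suffices to produce, for each finite $\mu_\iota$, a compact space $\Delta_\iota$, a $*$-isomorphism $L^\infty(\mu_\iota)\cong\fC(\Delta_\iota)$, a faithful Radon measure $\nu_\iota$ and a surjective isometry $U_\iota\colon L^p(\nu_\iota)\to L^p(\mu_\iota)$ intertwining the two multiplication representations; we will then set $\Ome=\bigsqcup_\iota\Delta_\iota$, $\nu=\sum_\iota\nu_\iota$ and $U=\bigoplus_\iota U_\iota$.

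For fixed finite $\mu_\iota$, the algebra $L^\infty(\mu_\iota)$ is a unital commutative C*-algebra, so the Gelfand transform $f\mapsto\hat f$ is an isometric $*$-isomorphism onto $\fC(\Delta_\iota)$, where $\Delta_\iota$ is the (compact) maximal ideal space. I would define $\nu_\iota$ to be the Radon measure furnished by the Riesz representation theorem applied to the positive linear functional $\hat f\mapsto\int f\,d\mu_\iota$ on $\fC(\Delta_\iota)$, so that $\int_{\Delta_\iota}\hat g\,d\nu_\iota=\int g\,d\mu_\iota$ for all $g\in L^\infty(\mu_\iota)$. The isometry is then defined on the dense subspace $\fC(\Delta_\iota)\subset L^p(\nu_\iota)$ by $U_\iota\hat f=f$, regarded as an element of $L^\infty(\mu_\iota)\subset L^p(\mu_\iota)$. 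The crucial point --- the modification of the classical $L^2$ argument --- is that since $f\mapsto\hat f$ intertwines continuous functional calculus one has $\widehat{|f|^p}=|\hat f|^p$, whence $\int_{\Delta_\iota}|\hat f|^p\,d\nu_\iota=\int|f|^p\,d\mu_\iota$ and $U_\iota$ is isometric. As $\fC(\Delta_\iota)$ is dense in $L^p(\nu_\iota)$ and $L^\infty(\mu_\iota)$ is dense in $L^p(\mu_\iota)$ (finiteness of $\mu_\iota$ and $p<\infty$), $U_\iota$ extends to a surjective isometry. The intertwining identity is immediate on the dense subspace, since $U_\iota M_{\nu_\iota}(\hat g)\hat f=U_\iota\widehat{gf}=gf=M_{\mu_\iota}(g)U_\iota\hat f$, and then holds everywhere by continuity. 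Faithfulness of $\nu_\iota$ follows from Urysohn's lemma: for nonempty open $V\subseteq\Delta_\iota$ choose $0\neq\hat h\in\fC(\Delta_\iota)$ with $0\le\hat h\le 1_V$; then $0<\|\hat h\|_{L^p(\nu_\iota)}^p=\int\hat h^p\,d\nu_\iota\le\nu_\iota(V)$, the strict inequality holding because $U_\iota$ is an injective isometry.

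Reassembling, each $\Delta_\iota$ is clopen in the disjoint union $\Ome=\bigsqcup_\iota\Delta_\iota$, which is therefore locally compact; continuity on $\Ome$ amounts to continuity on each block, so $\fC_b(\Ome)\cong\ell^\infty\text{-}\bigoplus_\iota\fC(\Delta_\iota)\cong L^\infty(\mu)$ as a $*$-algebra, giving the desired global transform $f\mapsto\hat f$. The measure $\nu=\sum_\iota\nu_\iota$ is Radon and faithful (each block is open and carries a faithful $\nu_\iota$), and $U=\bigoplus_\iota U_\iota$ is a surjective isometry $L^p(\nu)\to L^p(\mu)$ intertwining $M_\nu$ and $M_\mu$ blockwise. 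I expect the main obstacle to be this gluing step rather than the finite case: one must check that the decomposable structure really identifies $L^\infty(\mu)$ with the bounded continuous functions on the (generally non-$\sigma$-compact) disjoint union, and this is exactly where the hypothesis of decomposability, together with the corresponding description of $L^\infty(\mu)$ as locally essentially bounded classes, is essential --- for a genuinely infinite $\mu$ there is no single cyclic vector with which to run the classical one-shot Gelfand argument.
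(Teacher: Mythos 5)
Your proof is correct and takes essentially the same approach as the paper: reduce to the finite case via the decomposition $\mu=\sum_{\iota\in I}\mu_\iota$, apply the Gelfand transform using the key identity $\widehat{|f|^p}=|\hat{f}|^p$ to get an isometry, and reassemble over the topological coproduct $\Ome=\bigsqcup_\iota\Ome_\iota$. The only cosmetic difference is that you fix the cyclic vector to be the constant function $1$ (the paper takes an arbitrary fully supported norm-one $\xi\in L^p(\mu_\iota)$, defining $\nu$ by $\int\hat{f}\,d\nu=\int f|\xi|^p\,d\mu$ and $U\hat{f}=f\xi$), which is an equally valid specialization since each $\mu_\iota$ is finite.
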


\begin{proof}
We first assume that $\mu$ is finite. (The proof will work for the $\sigma$-finte
case as well.)  In this case there is a norm $1$ cyclic and separating vector $\xi$ for $M_\mu$;
indeed, let $\xi$ be any fully supported norm one element.
We let $\Ome$ denote the Gelfand spectrum of $L^\infty(\mu)$ and
$f\mapsto \hat{f}$ the Gelfand transform.  We observe that
$\what{|f|^p}=|\hat{f}|^p$.  

We define $\nu$ on $\Ome$ by
\[
\int_\Ome\hat{f}\,d\nu=\int f|\xi|^p\,d\mu.
\]
Since $\xi$ is fully supported, $\nu$ is faithful.
We then define $U:\fC(\Ome)\to L^p(\mu)$ by $U\hat{f}=f\xi$.  We observe that
\[
\norm{U\hat{f}}_{L^p(\mu)}^p=\int |f|^p|\xi|^p\,d\mu
=\int_\Ome |\hat{f}|^p\,d\nu=\norm{\hat{f}}_{L^p(\nu)}^p.
\]
Since $\fC(\Ome)$ is dense in $L^p(\nu)$, and $\xi$ is a cyclic vector,
$U$ extends to a surjective isometry on $L^p(\nu)$.  Finally,
if $f,g\in L^\infty(\mu)$, then
\[
UM_\nu(\hat{f})\hat{g}=U\what{fg}=fg\xi=M_\mu(f)U\hat{g}
\]
which, again by density of $\fC(\Ome)$ in $L^p(\nu)$, shows that
$UM_\nu(\hat{f})=M_\mu(f)U$.

Now consider general decomposable $\mu=\sum_{\iota\in I}\mu_\iota$.
Let for each $\iota$, $\Ome_\iota$ denote the Gelfand spectrum of $L^\infty(\mu_\iota)$
and we have C*-isomorphisms
\[
L^\infty(\mu)\cong\ell^\infty\text{-}\bigoplus_{\iota\in I}L^\infty(\mu_\iota)
\cong\ell^\infty\text{-}\bigoplus_{\iota\in I}\fC(\Ome_\iota)
\cong\fC_b(\Ome)
\]
where $\Ome=\bigsqcup_{\iota\in I}\Ome_\iota$ is the topological coproduct.  
Let $f\mapsto\hat{f}$ denote the composite isomorphism.  We observe, moreover, that
$L^p(\mu)
\cong\ell^p\text{-}\bigoplus_{\iota\in I}L^p(\mu_\iota)$, where each
$L^p(\mu_\iota)$ is an $M_\mu$-invariant subspace.
We let $\nu_\iota$ be a measure supported on $\Ome_\iota$
given as above, and $U_\iota:L^p(\nu_\iota)\to L^p(\mu_\iota)$ the associated
surjective isometry intertwining $M_{\mu_\iota}=M_\mu |_{L^p(\mu_\iota)}$
and $M_{\nu_\iota}$.  
Then $U=\bigoplus_{\iota\in I}U_\iota$ is the desired
isometry intertwining $M_\mu$ and $M_\nu$.
\end{proof}

\begin{theorem}\label{theo:Linfty}
The map $M_\mu:\min L^\infty(\mu)\to\fB(L^p(\mu))$ is
a complete isometry.
\end{theorem}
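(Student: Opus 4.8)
The plan is to reduce the statement to Proposition~\ref{prop:cbf} via the machinery assembled in Lemma~\ref{lem:p-rep}. The key observation is that $\min L^\infty(\mu)$ and $\min\fC_b(\Ome)$ are completely isometrically identified through the $*$-algebra isomorphism $f\mapsto\hat f$, since the minimal $p$-operator space structure is determined purely by the underlying Banach space (via the injective tensor product in \eqref{eq:minstructure}), and $f\mapsto\hat f$ is an isometric isomorphism of $L^\infty(\mu)$ onto $\fC_b(\Ome)$. Thus passing to the Gelfand picture costs nothing at the level of $p$-operator space structures. Since Proposition~\ref{prop:cbf} already tells us that $M_\nu:\min\fC_b(\Ome)\to\fB(L^p(\nu))$ is a complete isometry for the faithful Radon measure $\nu$, it remains only to transport this fact across the intertwining isometry $U$ supplied by Lemma~\ref{lem:p-rep}.

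\begin{proof}[Proof plan]
First I would invoke Lemma~\ref{lem:p-rep} to obtain the locally compact space $\Ome$, the $*$-isomorphism $f\mapsto\hat f$ of $L^\infty(\mu)$ onto $\fC_b(\Ome)$, the faithful Radon measure $\nu$, and the surjective isometry $U:L^p(\nu)\to L^p(\mu)$ satisfying $UM_\nu(\hat f)=M_\mu(f)U$, equivalently $M_\mu(f)=UM_\nu(\hat f)U^{-1}$. Because $f\mapsto\hat f$ is an isometric Banach space isomorphism, it is automatically a complete isometry of the associated minimal $p$-operator spaces $\min L^\infty(\mu)\to\min\fC_b(\Ome)$; this follows from the uniformity of the injective tensor product used to define the minimal structure in \eqref{eq:minstructure}, exactly as in the discussion preceding Proposition~\ref{prop:minspace}.

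Next I would conjugate by $U$ at every matrix level. Fix $n$ and consider $f=[f_{ij}]$ in $\mat_n(L^\infty(\mu))$. The amplification satisfies $M_\mu^{(n)}(f)=[M_\mu(f_{ij})]=[U M_\nu(\hat f_{ij})U^{-1}]$, which under the identification $\mat_n(\fB(L^p(\mu)))\cong\fB(L^p(\mu,\ell^p(n)))$ equals $\wtil U\,M_\nu^{(n)}(\hat f)\,\wtil U^{-1}$, where $\wtil U=U\otimes\id_{\ell^p(n)}$ is the induced surjective isometry on $L^p(\nu,\ell^p(n))\to L^p(\mu,\ell^p(n))$. Conjugation by a surjective isometry preserves operator norms on $\fB(L^p(\nu,\ell^p(n)))$, so
\[
\norm{M_\mu^{(n)}(f)}_{\fB(L^p(\mu,\ell^p))}
=\norm{M_\nu^{(n)}(\hat f)}_{\fB(L^p(\nu,\ell^p))}.
\]
By Proposition~\ref{prop:cbf} the right-hand side equals $\norm{\hat f}_{\mat_n(\min\fC_b(\Ome))}$, and by the complete isometry $f\mapsto\hat f$ this in turn equals $\norm{f}_{\mat_n(\min L^\infty(\mu))}$. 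Hence each amplification $M_\mu^{(n)}$ is an isometry, which is exactly the assertion that $M_\mu:\min L^\infty(\mu)\to\fB(L^p(\mu))$ is a complete isometry.
\end{proof}

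\noindent\textbf{The main obstacle.} The genuinely delicate point has already been discharged inside Lemma~\ref{lem:p-rep} and Proposition~\ref{prop:cbf}, so the theorem itself is a transport-of-structure argument. The only place demanding care is verifying that $\wtil U=U\otimes\id$ is a surjective isometry of $L^p(\nu,\ell^p(n))$ onto $L^p(\mu,\ell^p(n))$ and that conjugation by it genuinely implements $M_\mu^{(n)}$; this rests on the fact that $U$ intertwines the two multiplication representations coordinatewise and that surjective isometries between $L^p$-spaces behave well under the vector-valued amplification. Once that bookkeeping is in place, the equality of amplified norms is immediate, and no further estimation is required.
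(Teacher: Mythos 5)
Your proposal is correct and is essentially the paper's own proof: both invoke Lemma \ref{lem:p-rep}, observe that $f\mapsto\hat{f}$ is a complete isometry between the minimal structures (since the minimal structure depends only on the underlying Banach space), and then transport the complete isometry of $M_\nu$ from Proposition \ref{prop:cbf} across conjugation by the intertwining isometry $U$. The bookkeeping point you flag --- that the coordinatewise amplification of $U$ is a surjective isometry of $L^p(\nu,\ell^p(n))$ onto $L^p(\mu,\ell^p(n))$ --- is left implicit in the paper but is immediate from Fubini, since $\norm{\xi}_{L^p(\nu,\ell^p(n))}^p=\sum_{i=1}^n\norm{\xi_i}_{L^p(\nu)}^p$.
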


\begin{proof}
The above lemma provides a map $f\mapsto\hat{f}:L^\infty(\mu)\to\fC_b(\Ome)$,
which is a complete isometry for the minimal $p$-operator space structure on both spaces,
a faithful Radon measure $\nu$ on $\Ome$ and a surjective isometry 
$U:L^p(\nu)\to L^p(\mu)$ such that $M_\mu(f)=U^{-1}M_\nu(\hat{f})U$.  Since $M_\nu$ is 
completely isometric by Proposition \ref{prop:cbf}, we find that $M_\mu$ is a complete isometry.
\end{proof}

On the topic of $L^\infty(\mu)$,
we record the following useful result, aspects of which are folklore.
This will be used in Section\ref{ssec:Lone}.

\begin{lemma}\label{lem:commutant}
{\bf (i)} $M_\mu(L^\infty(\mu))$ is its own commutant in $\fB(L^p(\mu))$, and hence
a weak*-closed subalgebra.

{\bf (ii)}  There is a a completely contractive expectation $E:\fB(L^p(\mu))\to
M_\mu(L^\infty(\mu))$, i.e.\ $E(M_\mu(f)TM_\mu(g))=M_\mu(f)E(T)M_\mu(g)$
for $f,g$ in $L^\infty(\mu)$ and $T$ in $\fB(L^p(\mu))$.
\end{lemma}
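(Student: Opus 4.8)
The plan is to treat the two parts separately, using part (i) to pin down the range of the expectation in part (ii).

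For (i), one inclusion is immediate: since multiplication operators mutually commute, $M_\mu(L^\infty(\mu))$ is contained in its own commutant. For the reverse inclusion, suppose $T$ commutes with every $M_\mu(f)$. Using the decomposition $\mu=\sum_\iota\mu_\iota$ and the fact that $T$ commutes in particular with each projection $M_\mu(1_{\Ome_\iota})$ onto $L^p(\mu_\iota)$, I would first reduce to finite (or $\sigma$-finite) $\mu$, where the constant function $1$ lies in $L^p(\mu)$. Setting $g=T1$, for every $f\in L^\infty(\mu)$ one gets $Tf=TM_\mu(f)1=M_\mu(f)T1=fg$, so $T$ acts as multiplication by $g$ on the dense subspace $L^\infty(\mu)\subset L^p(\mu)$. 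Testing against normalised indicators $1_E/\mu(E)^{1/p}$ forces $\norm{g}_\infty\leq\norm{T}$, whence $g\in L^\infty(\mu)$ and $T=M_\mu(g)$; reassembling the pieces gives the general case. Finally, the commutant of any subset of $\fB(L^p(\mu))$ is weak*-closed, since for each fixed $S$ the maps $T\mapsto TS$ and $T\mapsto ST$ are weak*-continuous on the dual space $\fB(L^p(\mu))\cong(L^{p'}(\mu)\what{\otimes}L^p(\mu))^*$; this yields the second assertion.

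For (ii), the idea is to average the conjugation action of the unimodular multiplication operators. Let $G=\{u\in L^\infty(\mu):|u|=1\text{ a.e.}\}$, an abelian (hence amenable) group under pointwise multiplication; each $M_\mu(u)$ is a surjective isometry of $L^p(\mu)$, so $T\mapsto\mathrm{Ad}(M_\mu(u))T:=M_\mu(u)TM_\mu(u)^{-1}$ is a complete isometry of $\fB(L^p(\mu))$. Fixing an invariant mean $m$ on $G$, I would define $E(T)$ by
\[
\langle E(T)\xi,\eta\rangle=m_u\langle M_\mu(u)TM_\mu(u)^{-1}\xi,\eta\rangle\quad(\xi\in L^p(\mu),\ \eta\in L^{p'}(\mu)).
\]
Since $L^p(\mu)$ is reflexive and the right-hand side is a bounded bilinear form of norm at most $\norm{T}$, this determines $E(T)\in\fB(L^p(\mu))$ with $\norm{E(T)}\leq\norm{T}$. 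Left-invariance of $m$ gives $\mathrm{Ad}(M_\mu(v))E(T)=E(T)$ for every $v\in G$, so $E(T)$ commutes with every unimodular $M_\mu(v)$; as $L^\infty(\mu)$ is spanned by its unimodular elements, $E(T)$ lies in the commutant $M_\mu(L^\infty(\mu))$ by part (i). Because multiplications commute, $\mathrm{Ad}(M_\mu(u))$ fixes $M_\mu(L^\infty(\mu))$ pointwise, so $E$ is an idempotent onto $M_\mu(L^\infty(\mu))$, and the same commutativity lets one pull $M_\mu(f),M_\mu(g)$ through the mean to obtain $E(M_\mu(f)TM_\mu(g))=M_\mu(f)E(T)M_\mu(g)$. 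For complete contractivity, under $\mat_n(\fB(L^p(\mu)))\cong\fB(L^p(\mu,\ell^p(n)))$ the amplification $E^{(n)}$ is the same mean of conjugations by the isometries $M_\mu(u)\otimes\id_{\ell^p(n)}$, so $\norm{E^{(n)}(T)}\leq\norm{T}$.

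The main obstacle is the hard inclusion in (i) --- the maximal-abelian property of $M_\mu(L^\infty(\mu))$ --- where the reduction to the $\sigma$-finite case and the identification of $g$ as an honest $L^\infty$-function must be carried out carefully for a general decomposable $\mu$. Once (i) is in hand, the only delicate point in (ii) is confirming that the mean-averaged form genuinely defines a bounded operator with range inside $M_\mu(L^\infty(\mu))$; the idempotence, the module identity, and complete contractivity then follow formally from the averaging structure.
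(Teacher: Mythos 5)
Your proposal is correct and follows essentially the same route as the paper: part (i) shows a commuting $T$ acts as multiplication by the function obtained from applying $T$ to indicators of the $\mu$-finite pieces of the decomposition, with the $L^\infty$ bound coming from testing against normalised indicators, and part (ii) averages conjugation by unimodular multiplication operators against an invariant mean, identifying the range via part (i) and amplifying to get complete contractivity --- exactly the paper's argument. The only cosmetic difference is in the gluing step of (i): you reassemble $T$ through the $\ell^p$-direct sum decomposition $L^p(\mu)\cong\ell^p\text{-}\bigoplus_{\iota}L^p(\mu_\iota)$, whereas the paper takes a weak* limit of the partial sums $\sum_{\iota\in J}T1_{F_\iota}$.
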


\begin{proof}
{\bf (i)}  Let $\fF$ be the family of $\mu$-finite sets.  If $F\in\fF$ then $1_F\in L^\infty\cap L^p(\mu)$.
Fix $T$ in the comutant of $M_\mu(L^\infty(\mu))$ in $\fB(L^p(\mu))$ 
and let $h_F=T1_F$ for $F$ in $\fF$.  We observe that
for $\xi$ in $L^\infty\cap L^p(\mu)$, the space of which is dense in $L^p(\mu)$, that
$T(1_F\xi)=T(1_F)\xi=h_F\xi$, from which it easily follows that
$h_F\in L^\infty(\mu)$ with $\norm{h_F}_\infty\leq\norm{T}$.  
It is clear that $1_{F}h_{F'}=0$ and $h_F+h_{F'}=h_{F\cup F'}$, if $F\cap F'$ is $\mu$-null.
We let $\{F_\iota\}_{\iota\in I}$ be a family of sets witnessing the decomposability of $\mu$.
We observe that the net $\left(\sum_{\iota\in J}h_{F_\iota}\right)_J$,
indexed over the increasing family of finite subsets of $I$, converges weak* 
to an element $h$ of $L^\infty(\mu)$.
Indeed if $\psi\in L^1(\mu)$ then there is a $\sigma$-finite set $S$ so $1_S\psi=\psi$ and
\[
\lim_J\int\sum_{\iota\in J}h_{F_\iota}\psi\,d\mu
= \int\sum_{\iota\in I_S}h_{F_\iota}\psi\,d\mu
\]
where $I_S=\{\iota:\mu(F_\iota\cap S)>0\}$ is countable.  In particular, $h\psi=
\sum_{\iota\in I_S}h_{F_\iota}\psi$.  Now if $\xi\in L^p\cap L^\infty(\mu)$
and $\eta\in L^{p'}(\mu)$ we let $S$ be $\sigma$-finite so $1_S\xi=\xi$ and we have
\begin{align*}
\int (T\xi)\eta\,d\mu&=\int T\left(\sum_{\iota\in I_S}1_{F_\iota}\xi\right)\eta\,d\mu
=\int \sum_{\iota\in I_S}T(1_{F_\iota}\xi)\eta\,d\mu \\
&=\int \sum_{\iota\in I_S}h_{F_\iota}\xi\eta\,d\mu=\int h\xi\eta\,d\mu.
\end{align*}
Thus $T=M_\mu(h)$.  The commutant of any set in $\fB(L^p(\mu))$
is weak*-closed.

{\bf (ii)} We let $U^\infty(\mu)=\{u\in L^\infty(\mu):u^*u=1\}$.  
Let $m$ be an invariant mean on $\ell^\infty(U^\infty(\mu))$, which we may
consider, notationally, as a finitely additive measure.
We define $E$ by
\[
E(T)=\int_{U^\infty(\mu)}M_\mu(u)TM_\mu(u^*)\,dm(u)
\]
where the ``integral" is understood in the weak* sense.
Since $\mathrm{span}U^\infty(\mu)=L^\infty(\mu)$, 
it is immediate that $E$ is a contractive expectation.  If
$T\in\mat_n(\fB(L^p(\mu)))\cong\fB(\ell^p(n)\otimes^p L^p(\mu))$ we observe that
\[
E^{(n)}(T)=\int_{U^\infty(\mu)}
(I\otimes M_\mu(u))T(I\otimes M_\mu(u^*))\,dm(u)
\]
Hence $E$ is completely contractive.
\end{proof}

\section{Maximal $p$-operator spaces}\label{sec:maximal}

\subsection{Definitions and basic properties}
For a Banach space $\fX$, we consider two 
$p$-operator space structures on $\fX$, whose norms on $x$ in $\mat_n(\fX)$ are
given by
\begin{align*}
\norm{x}_{\max_{L^p}}
&=\sup\left\{\norm{\pi^{(n)}(x)}:\pi:\fX\to\fB(L^p(\phi))\text{ is a contraction, }\phi\text{ is a measure}
\right\} \\
&=\sup\left\{\norm{\pi^{(n)}(x)}:\pi:\fX\to\fB(\ell^p(m))\text{ is a contraction, }m\in\En\right\} \\
\norm{x}_{\max}
&=\sup\left\{\norm{\pi^{(n)}(x)}:\pi:\fX\to\fB(E)\text{ is a contraction, } E\in\fS\fQ_p\right\}.
\end{align*}
The equality of the two descriptions of $\norm{\cdot}_{\max_{L^p}}$ is an immediate 
consequence of Lemma \ref{lem:ellpinLp}.  It is clear that these norms give $p$-operator space structures on
$\fX$, which we call the {\em maximal structure on $L^p$} and the {\em maximal structure},
respectively.  We denote the associated operator spaces by $\max_{L^p}\fX$ and $\max\fX$.
There is an equivalent formulation of $\max\fX$ given in \cite[p.\ 95]{junge}, presented in
a local context.
It is clear that $\id:\max\fX\to\max_{L^p}\fX$ is a complete contraction.
There is no loss of generality if we replace contractions $\pi$, above, by isometries; simply
consider the isometry $\id:\fX\to\min\fX$ which acts on $L^p$ by (\ref{eq:minembedding}).

It is clear that for every operator space $\fV$ and $v$ in $\mat_n(\fV)$, that
\[
\norm{v}\leq\norm{v}_{\max}.
\] 
It is unknown to the authors whether the operator space structures $\max$ and $\max_{L^p}$
coincide on any non-trivial Banach space.  We thus use the following definition.
We say that an $p$-operator space $\fV$ is of {\em maximal type} if for $v$ in
$\mat_n(\fV)$ we have
\[
\norm{v}_{\max_{L^p}}\leq\norm{v}
\]

\begin{lemma}\label{lem:maximal}
Let $\fV$ be a $p$-operator space.  Then the following are equivalent:

{\bf (i)} $\fV$ is of maximal type;

{\bf (ii)} $\fC\fB_p(\fV,\fZ)=\fB(\fV,\fZ)$ isometrically for any $p$-operator space
$\fZ$ acting on $L^p$;

{\bf (iii)} $\fC\fB_p(\fV,\fB(\ell^p(n)))=\fB(\fV,\fB(\ell^p(n)))$ isometrically
for each $n$.
\end{lemma}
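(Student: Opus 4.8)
The plan is to prove the cycle (i)$\Rightarrow$(ii)$\Rightarrow$(iii)$\Rightarrow$(i). Throughout I would use that $\fC\fB_p(\fV,\fZ)\subseteq\fB(\fV,\fZ)$ contractively for every $p$-operator space $\fZ$, since $\pcbnorm{T}=\sup_k\norm{T^{(k)}}\geq\norm{T^{(1)}}=\norm{T}$; consequently, in the isometric identifications asserted in (ii) and (iii), only the reverse estimate $\pcbnorm{T}\leq\norm{T}$ (for every bounded $T$) needs to be produced, and this simultaneously forces every bounded map to be completely bounded. The unifying observation is that all three conditions merely repackage the single assertion that every contraction out of $\fV$ into an algebra of operators on some $L^p$ is automatically completely contractive; the work is to match this assertion against the two descriptions of $\norm{\cdot}_{\max_{L^p}}$.

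For (i)$\Rightarrow$(ii), suppose $\fV$ is of maximal type and let $\fZ$ act on $L^p$, so there is a complete isometry $\iota:\fZ\hookrightarrow\fB(L^p(\phi))$ for some measure $\phi$. Given $T\in\fB(\fV,\fZ)$, I normalise so that $\norm{T}\leq 1$; then $\iota\circ T:\fV\to\fB(L^p(\phi))$ is a contraction, so it is one of the maps entering the definition of $\norm{\cdot}_{\max_{L^p}}$. For $v$ in $\mat_n(\fV)$ the complete isometry of $\iota$ gives $\norm{T^{(n)}(v)}=\norm{(\iota\circ T)^{(n)}(v)}$, and the definition of the maximal norm together with maximality of $\fV$ yields $\norm{(\iota\circ T)^{(n)}(v)}\leq\norm{v}_{\max_{L^p}}\leq\norm{v}$. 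Hence $\pcbnorm{T}\leq\norm{T}$, and by homogeneity this gives the isometric identification in (ii).

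The implication (ii)$\Rightarrow$(iii) is immediate, since $\fB(\ell^p(n))$ acts on $L^p$: indeed $\ell^p(n)$ is $L^p$ of the counting measure on $n$ points, so taking $\fZ=\fB(\ell^p(n))$ in (ii) is precisely (iii). For (iii)$\Rightarrow$(i), I would invoke the second, finite-dimensional description of the maximal norm, which is available by Lemma \ref{lem:ellpinLp}: for $v$ in $\mat_n(\fV)$,
\[
\norm{v}_{\max_{L^p}}=\sup\left\{\norm{\pi^{(n)}(v)}:\pi:\fV\to\fB(\ell^p(m))\text{ a contraction},\ m\in\En\right\}.
\]
For each such contraction $\pi$, applying (iii) with the integer $m$ shows $\pcbnorm{\pi}=\norm{\pi}\leq 1$, i.e.\ $\pi$ is completely contractive, whence $\norm{\pi^{(n)}(v)}\leq\norm{v}$. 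Taking the supremum gives $\norm{v}_{\max_{L^p}}\leq\norm{v}$, which is exactly (i).

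I do not expect a serious obstacle here; the argument is essentially bookkeeping around the universal property of $\max_{L^p}$. The two points that require care are the transport of the $\max_{L^p}$-bound through the complete isometry $\iota$ in (i)$\Rightarrow$(ii), and the appeal to Lemma \ref{lem:ellpinLp} in (iii)$\Rightarrow$(i): it is precisely the equivalence of the two descriptions of $\norm{\cdot}_{\max_{L^p}}$ that allows a hypothesis phrased only in terms of the finite-dimensional algebras $\fB(\ell^p(m))$ to control a norm defined a priori through all spaces $\fB(L^p(\phi))$.
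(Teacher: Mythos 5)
Your proof is correct and takes essentially the same route as the paper: the free contractive inclusion $\fC\fB_p(\fV,\fZ)\subset\fB(\fV,\fZ)$, the definition of maximal type via contractions into $\fB(L^p(\phi))$ (transported through a complete isometry $\fZ\hookrightarrow\fB(L^p(\phi))$ for general $\fZ$), and Lemma \ref{lem:ellpinLp} --- via the two descriptions of $\norm{\cdot}_{\max_{L^p}}$ --- to let the finite-dimensional condition (iii) control the general one. The only difference is organizational: you run the cycle (i)$\Rightarrow$(ii)$\Rightarrow$(iii)$\Rightarrow$(i), whereas the paper states (i)$\Leftrightarrow$(ii) by definition and then handles (ii)$\Leftrightarrow$(iii).
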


\begin{proof}
It is the case for any operator space $\fV$
that $\fC\fB_p(\fV,\fB(\ell^p(n)))\subset\fB(\fV,\fB(\ell^p(n)))$
contractively.  We obtain the converse inclusion, contractively, only for
maximal type $p$-operator spaces, by definition.  Thus (i) is equivalent to (ii).
That (ii) implies (iii) is obvious.
That (iii) implies (ii) is a consequence of Lemma \ref{lem:ellpinLp}.
\end{proof}

\begin{corollary}\label{cor:maxtp}
Let $\fV$ be a $p$-operator space.  The the following are equivalent:

{\bf (i')} $\fV$ is of maximal type;

{\bf (ii')} $\fV\what{\otimes}^p\fW=\fV\otimes^\gam\fW$, isometrically, for any
$p$-operator space $\fW$;

{\bf (iii')} $\fV\what{\otimes}^p\fN(\ell^p(m))=\fV\otimes^\gam\fN(\ell^p(m))$,
isometrically, for any $m$.
\end{corollary}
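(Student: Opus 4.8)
The plan is to argue entirely by duality, transporting the equality of tensor norms into an equality of completely bounded and bounded mapping spaces. The two basic ingredients are the duality for the $p$-projective tensor product of \cite{daws}, namely $(\fV\what{\otimes}^p\fW)^*\cong\fC\fB_p(\fV,\fW^*)$ isometrically, together with the classical identity $(\fV\otimes^\gam\fW)^*\cong\fB(\fV,\fW^*)$. Because a completely bounded map is bounded with $\norm{\cdot}\leq\pcbnorm{\cdot}$, we have $\fC\fB_p(\fV,\fW^*)\subseteq\fB(\fV,\fW^*)$ contractively; dualising, the two tensor norms satisfy $\norm{\cdot}_{\what{\otimes}^p}\leq\norm{\cdot}_{\otimes^\gam}$ on $\fV\otimes\fW$, and by the bipolar theorem they coincide isometrically precisely when $\fC\fB_p(\fV,\fW^*)=\fB(\fV,\fW^*)$ isometrically. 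Thus (ii') is equivalent to the assertion that every bounded map $\fV\to\fW^*$ is completely bounded with the same norm, for every $p$-operator space $\fW$, and (iii') is the analogous statement restricted to $\fW=\fN(\ell^p(m))$.

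Two of the implications are then immediate. That (ii') implies (iii') is the specialisation $\fW=\fN(\ell^p(m))$. For (iii') implies (i'), I would dualise (iii') and use $\fN(\ell^p(m))^*\cong\fB(\ell^p(m))$ to obtain $\fC\fB_p(\fV,\fB(\ell^p(m)))=\fB(\fV,\fB(\ell^p(m)))$ isometrically for every $m$; this is exactly condition (iii) of Lemma \ref{lem:maximal}, which delivers (i').

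The real content is (i') implies (ii'), and here the obstacle is that for a general $p$-operator space $\fW$ the dual $\fW^*$ need not act on $L^p$, so Lemma \ref{lem:maximal}(ii) cannot be applied to the target $\fW^*$ directly. I would remove this by first recording a max--min duality: $\fV$ is of maximal type if and only if its dual $p$-operator space $\fV^*$ is minimal. Indeed, the dual structure gives $\mat_n(\fV^*)\cong\fC\fB_p(\fV,\fB(\ell^p(n)))$ isometrically, while the minimal structure (\ref{eq:minstructure}) gives $\mat_n(\min\fV^*)\cong\fV^*\otimes^\lam\fB(\ell^p(n))\cong\fB(\fV,\fB(\ell^p(n)))$ isometrically, the last identification holding because $\fB(\ell^p(n))$ is finite dimensional so that every bounded map $\fV\to\fB(\ell^p(n))$ is finite rank and the injective tensor norm is its operator norm. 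These two structures agree for every $n$ exactly under condition (iii) of Lemma \ref{lem:maximal}, that is, exactly when $\fV$ is of maximal type.

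With $\fV^*$ minimal in hand, I would exploit the symmetry of both tensor products. Flipping the two factors gives complete isometries $\fV\what{\otimes}^p\fW\cong\fW\what{\otimes}^p\fV$ and $\fV\otimes^\gam\fW\cong\fW\otimes^\gam\fV$, hence, upon dualising, the isometric transposition identities $\fC\fB_p(\fV,\fW^*)\cong\fC\fB_p(\fW,\fV^*)$ and $\fB(\fV,\fW^*)\cong\fB(\fW,\fV^*)$. Since $\fV^*$ is minimal, Proposition \ref{prop:minspace}(ii) yields $\fC\fB_p(\fW,\fV^*)=\fB(\fW,\fV^*)$ isometrically for every $p$-operator space $\fW$. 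Transposing back gives $\fC\fB_p(\fV,\fW^*)=\fB(\fV,\fW^*)$ isometrically, and the duality of the first paragraph returns $\fV\what{\otimes}^p\fW=\fV\otimes^\gam\fW$, which is (ii'). The main obstacle, as indicated, is precisely the passage from targets of the form $\fB(\ell^p(m))$, where maximal type supplies the equality through Lemma \ref{lem:maximal}, to an arbitrary dual target $\fW^*$; the device that removes it is the reformulation of maximal type as minimality of $\fV^*$, after which Proposition \ref{prop:minspace} applies with the minimal space occupying the target slot.
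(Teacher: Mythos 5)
Your proof is correct, but it reaches (ii') from (i') by a genuinely different route than the paper, and the divergence originates in a misstatement that is ultimately harmless: the ``obstacle'' you identify does not exist, since by \cite[Thm.\ 4.3]{daws} (as invoked in the paper) the dual of \emph{every} $p$-operator space represents completely isometrically on some $L^p$. The paper therefore simply applies Lemma \ref{lem:maximal}(ii) with $\fZ=\fW^*$ and concludes via the same dual-pairing/bipolar argument as your first paragraph. Your workaround --- proving inline that $\fV$ is of maximal type if and only if $\fV^*$ is minimal (in substance Proposition \ref{prop:maximal}(i) together with its converse, which the paper establishes only \emph{after} this corollary, so no circularity arises), then transposing via the flip symmetry of $\what{\otimes}^p$ and $\otimes^\gam$ so that the minimal space $\fV^*$ occupies the target slot where Proposition \ref{prop:minspace}(ii) applies --- is valid and buys genuine economy of means: it frees the corollary from the nontrivial representation theorem \cite[Thm.\ 4.3]{daws}, and your cyclic scheme (i')$\Rightarrow$(ii')$\Rightarrow$(iii')$\Rightarrow$(i') also sidesteps the paper's bidual argument for the converse direction, which requires \cite[Prop.\ 4.4]{daws} and the canonical embedding $\kappa_\fZ$. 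Two points you should make explicit to be complete: the complete isometry $\fV\what{\otimes}^p\fW\cong\fW\what{\otimes}^p\fV$, from which your isometric transposition $\fC\fB_p(\fV,\fW^*)\cong\fC\fB_p(\fW,\fV^*)$ is derived, needs justification --- it holds because the shuffle permutation matrices implementing the flip are isometries of $\ell^p(km)$, so conjugation by them is compatible with axiom $(M_p)$ and with the definition of the $p$-projective norm; and, like the paper, you use that $\fN(\ell^p(m))^*\cong\fB(\ell^p(m))$ holds at the level of dual $p$-operator space structures (trace duality), not merely as Banach spaces.
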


\begin{proof}
We will show that each statement (n') of the present result, is equavalent
to statment (n) of Lemma \ref{lem:maximal}

We have that $\fW^*$ represents completely isometrically on some $L^p$
by \cite[Thm.\ 4.3]{daws}.  
Hence, thanks to the well-known dual paring $\langle v\otimes w,T\rangle=
Tv(w)$  of $\fV\otimes\fW$ with $\fB(\fV,\fW^*)$
and its $p$-operator space analogue (\cite[Prop.\ 4.9]{daws}), 
if (ii) of the above lemma holds, then the $p$-operator projective and projective
tensor norms agree on $\fV\otimes\fW$.   If (ii') holds, then
statment (ii) of the above lemma holds whenever $\fZ=\fW^*$, i.e.\ for any $p$-operator 
dual space.  Hence statment (ii) holds with $\fZ^{**}$ in place of $\fZ$ we 
let $\kappa_\fZ:\fZ\to\fZ^{**}$ denote the canonical embedding and have that
$\fB(\fV,\fZ)\cong\kappa_\fZ\circ\fB(\fV,\fZ)\subset\fB(\fV,\fZ^{**})=\fC\fB_p(\fV,\fZ^{**})$ isometrically.  
If $\fZ$ acts on $L^p$ then, by \cite[Prop.\ 4.4]{daws}, $\kappa_\fZ$ is a complete
isometry so $\fC\fB_p(\fV,\fZ)\cong\kappa_\fZ\circ\fC\fB_p(\fV,\fZ)\subset
\fC\fB_p(\fV,\fZ^{**})$ isometrically, hence $\fB(\fV,\fZ)\cong\kappa_\fZ\circ\fB(\fV,\fZ)
=\kappa_\fZ\circ\fC\fB_p(\fV,\fZ)\cong\fC\fB_p(\fV,\fZ)$ isometrically, hence statment (ii)
holds generally. 

Just as above, (iii') holds if and only if (iii) of the above lemma holds.
\end{proof}

We observe that if $\fV$ and $\fW$ are each maximal type $p$-operator spaces,
then $\fV\hat{\otimes}^p\fW$ is also of maximal type.  Indeed, if $\fZ$ acts on $L^p$,
\cite[Prop.\ 4.9]{daws} provided isometric identifications
\[
\fC\fB_p(\fV\hat{\otimes}^p\fW,\fZ)
\cong\fC\fB_p(\fV,\fC\fB_p(\fW,\fZ))=\fB(\fV,\fB(\fW,\fZ))=\fB(\fV\otimes^\gam\fW,\fZ)
\]
and we appeal to statements (ii) and (ii') above.  We are unaware of whether
$\max_{L^p}\fV\hat{\otimes}^p\max_{L^p}\fW$ is completely isometric to
$\max_{L^p}(\fV\otimes^\gam\fW)$, but this does hold for $L^1$-spaces, as we
will see in Section \ref{ssec:Lone}

\subsection{Duality and quotients}

\begin{proposition}\label{prop:maximal}
{\bf (i)} If $\fV$ is a maximal type $p$-operator space then the dual structure
is minimal, i.e.\ $\fV^*=\min\fV^*$.  In particular,
$(\max\fV)^*=\min\fV^*=(\max_{L^p}\fV)^*$.

{\bf (ii)} If $\fV$ is a complete quotient of a maximal type $p$-operator space,
then $\fV$ is of maximal type.
\end{proposition}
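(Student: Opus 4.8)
The plan is to handle the two parts by different mechanisms: part (i) via the duality between completely bounded maps and the $p$-projective tensor product, and part (ii) via a norm-controlled lifting through the metric surjections $q^{(m)}$. For (i), I would verify the criterion of Proposition \ref{prop:minspace}(ii), namely that $\fC\fB_p(\fW,\fV^*)=\fB(\fW,\fV^*)$ isometrically for every $p$-operator space $\fW$. The device is transposition. By the $p$-operator duality of \cite[Prop.\ 4.9]{daws} — the pairing $\langle v\otimes w,T\rangle=Tv(w)$ and its matricial refinement — one has $\fC\fB_p(\fV,\fW^*)\cong(\fV\what{\otimes}^p\fW)^*$ and $\fC\fB_p(\fW,\fV^*)\cong(\fW\what{\otimes}^p\fV)^*$; since $\what{\otimes}^p$ is symmetric these two duals coincide, so transposition $T\mapsto T'$ identifies $\fC\fB_p(\fW,\fV^*)$ with $\fC\fB_p(\fV,\fW^*)$ isometrically, and the Banach-space transpose likewise identifies $\fB(\fW,\fV^*)$ with $\fB(\fV,\fW^*)$ isometrically. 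Now $\fW^*$ acts on $L^p$ by \cite[Thm.\ 4.3]{daws}, and $\fV$ is of maximal type, so Lemma \ref{lem:maximal}(ii) gives $\fC\fB_p(\fV,\fW^*)=\fB(\fV,\fW^*)$ isometrically. Chaining the two transpositions yields $\pcbnorm{S}=\norm{S}$ for every $S\in\fB(\fW,\fV^*)$, so $\fV^*$ is minimal, i.e.\ $\fV^*=\min\fV^*$. For the concluding assertion I would note that $\max_{L^p}\fV$ is of maximal type directly from the definition of its norm, while $\max\fV$ is of maximal type because $\id:\max\fV\to\max_{L^p}\fV$ is completely contractive, giving $\norm{\cdot}_{\max_{L^p}}\leq\norm{\cdot}_{\max}$; hence both have minimal dual $\min\fV^*$.

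For (ii), let $q:\fY\to\fV$ be a complete quotient map with $\fY$ of maximal type. I would apply the criterion of Lemma \ref{lem:maximal}(iii): it suffices to show that every bounded $T:\fV\to\fB(\ell^p(n))$ satisfies $\pcbnorm{T}=\norm{T}$. Since $q$ is a complete contraction, $Tq:\fY\to\fB(\ell^p(n))$ is bounded with $\norm{Tq}\leq\norm{T}$, and maximality of $\fY$ forces $\pcbnorm{Tq}=\norm{Tq}\leq\norm{T}$. Because $q$ is a complete quotient map, each amplification $q^{(m)}$ is a metric surjection, so any $v\in\mat_m(\fV)$ with $\norm{v}<1$ lifts to $u\in\mat_m(\fY)$ with $\norm{u}<1$ and $q^{(m)}(u)=v$; then $T^{(m)}(v)=(Tq)^{(m)}(u)$ gives $\norm{T^{(m)}(v)}\leq\pcbnorm{Tq}\leq\norm{T}$. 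Taking suprema over such $v$ and over $m$ yields $\pcbnorm{T}\leq\norm{T}$, and the reverse inequality is automatic, so $\fV$ is of maximal type.

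I expect the main obstacle to lie in part (i), specifically in confirming that transposition preserves the $p$cb-norm (and not merely the operator norm) — that is, in correctly invoking the duality machinery of \cite{daws} so that $\fC\fB_p(\fW,\fV^*)$ and $\fC\fB_p(\fV,\fW^*)$ are genuinely identified as dual spaces of the (symmetric) $p$-projective tensor product $\fV\what{\otimes}^p\fW$. Part (ii) is essentially routine once the lifting property of the $q^{(m)}$ is invoked; the only point requiring care is that the lift can be taken with controlled norm at \emph{every} matrix level, which is precisely the content of $q$ being a complete quotient map.
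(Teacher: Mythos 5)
Your part (ii) is essentially the paper's own proof: compose a contraction $T:\fV\to\fB(\ell^p(n))$ with the quotient map, use maximality of the domain to make $Tq$ completely contractive, and lift matrices of norm less than one through the amplifications $q^{(m)}$; there is nothing to add there. Part (i) is correct but follows a genuinely different route. The paper argues directly at the level of matrix norms: by the definition of the dual structure, $\mat_n(\fV^*)\cong\fC\fB_p(\fV,\fB(\ell^p(n)))$, which by maximality (Lemma \ref{lem:maximal}(iii)) coincides isometrically with $\fB(\fV,\fB(\ell^p(n)))$; evaluating at a dense subset $\Ome$ of the unit ball of $\fV$ embeds the latter isometrically into $\ell^\infty(\Ome,\fB(\ell^p(n)))$, and by (\ref{eq:minembedding}) and (\ref{eq:cbf}) these are precisely the matrix norms of $\min\fV^*$. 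That argument needs only the weakest form of maximality (targets $\fB(\ell^p(n))$) and no tensor-product theory. Your route instead verifies the criterion of Proposition \ref{prop:minspace}(ii) for every $\fW$, via the duality $(\fV\what{\otimes}^p\fW)^*\cong\fC\fB_p(\fV,\fW^*)$ of \cite[Prop.\ 4.9]{daws}, the fact that $\fW^*$ acts on $L^p$ (\cite[Thm.\ 4.3]{daws}), and --- the one point you rightly flag --- the commutativity of $\what{\otimes}^p$. That symmetry is true but it is not among the facts this paper quotes, so you must supply it: the flip $\fV\otimes\fW\to\fW\otimes\fV$ is completely isometric for $\what{\otimes}^p$ because the shuffle matrices implementing $\ell^p(n)\otimes^p\ell^p(m)\cong\ell^p(m)\otimes^p\ell^p(n)$ are coordinate permutations, hence isometries of $\ell^p(nm)$, so any factorization $u=\alp(x\otimes y)\beta$ transforms into a factorization of the flipped element with the same norm data. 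With that supplied, transposition does preserve the $p$cb-norm and your proof closes; in effect your argument is the dual form of the paper's own proof of Corollary \ref{cor:maxtp}, and it has the mild virtue of making explicit why $(\max\fV)^*=(\max_{L^p}\fV)^*$ (both structures are of maximal type), a point the paper leaves implicit --- at the cost of heavier machinery where the paper's evaluation-embedding argument suffices.
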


\begin{proof}
{\bf (i)} We follow the proof form classical operator spaces -- see \cite[Cor.\ 2.8]{blecher}
or \cite[(3.3.13)]{effrosruan} -- and use Lemma \ref{lem:maximal}.  We let $\Ome$ be a dense subset
of the unit ball of $\fV$, and we have complete isometries
\[
\mat_n(\fV^*)\cong\fC\fB(\fV,\fB(\ell^p(n))=\fB(\fV,\fB(\ell^p(n))\til{\subset}
\ell^\infty(\Ome,\fB(\ell^p(n))
\]
whose composition is given by $[\psi_{ij}]\mapsto(\ome\mapsto[\psi_{ij}(\ome)])$.
By (\ref{eq:minembedding}) this is the minimal $p$-operator structure on $\fV^*$.

{\bf (ii)} If $q:\fV\to\fZ$ is a complete quotient map, and $T:\fZ\to\fB(\ell^p(n))$ is a linear contraction,
then $T\circ q:\fV\to\fB(\ell^p(n))$ is a contraction, hence a complete
contraction by (i).  Thus if $z$ is in the open unit ball of $\mat_n(\fZ)$, there
is $v$ in the open unit ball of $\mat_n(\fV)$ so $z=q^{(n)}(v)$.  Then for any linear contraction
$T:\fZ\to\fB(\ell^p(n))$ we have $\norm{T^{(n)}(z)}_{\fB(\ell^p)}=
\norm{(T\circ q)^{(n)}(v)}_{\fB(\ell^p)}<1$, so $T$ is a complete contraction.
\end{proof}

We aim obtain the dual statment to (i), above.  
We note that unlike in the $2$-operator
space setting, it is not a priori obvious that $(\min\fC(\Ome))^{**}=\min\fC(\Ome)^{**}$
completely isometrically, though we will establish this fact below. 

We require a preparatory idea
from the theory of vector measures.  For a compact Hausdorff space $\Ome$ we let
$M(\Ome)$ denote the space of complex Borel measures on $\Ome$.
Furthermore, if $E$ is a Banach space we let $M(\Ome,E)$ denote the
$E$-valued Borel measures on $\Ome$ of bounded variation.
If $E$ satisfies the Radon-Nikodym property of \cite[p.\ 61]{diesteluhl} we have
\begin{equation}\label{eq:rnp}
M(\Ome,E)=\bigcup_{\nu\in M^+(\Ome)}L^1(\nu,E)
\cong \bigcup_{\nu\in M^+(\Ome)}L^1(\nu)\otimes^\gam E
\cong M(\Ome)\otimes^\gam E
\end{equation}
where the implied isomorphism is isometric.  Indeed, if $G\in M(\Ome,E)$,
there is $\nu$ in $M^+(\Ome)$ and $g$ in $L^1(\nu,E)$ for which $G(B)=\int_B g\,d\nu$, with
$\norm{G}_{M(\Ome,E)}=|G|(B)=\norm{g}_{L^1(\nu,E)}$.
It is well-known that $L^1(\nu,E)\cong L^1(\nu)\otimes^\gam E$ isometrically.
Since, by Lebesgue decomposition, $L^1(\nu)$ is contractively compelmented
in $M(\Ome)$, we have that $L^1(\nu)\otimes^\gam E$ embeds isometrically into
$M(\Ome)\otimes^\gam E$.  Moroever, each element in $M(\Ome)\otimes^\gam E$
is an element of some $L^1(\nu)\otimes^\gam E$.  Indeed, write an element of the former
as $\sum_{k=1}^\infty\nu_k\otimes x_k$, where each $\norm{x_k}_E=1$ and
$\sum_{k=1}^\infty\norm{\nu_k}_M<\infty$.  Then let $\nu=\sum_{k=1}^\infty|\nu_k|$
and observe that each $\nu_k<\!\!<\nu$, so the element is in $L^1(\nu)\otimes^\gam E$.

\begin{theorem}\label{theo:dualmin}
If $\fW$ is a minimal operator space, then its dual operator space is maximal on $L^p$, i.e.\
$(\min\fW)^*=\max_{L^p}\fW^*$.
\end{theorem}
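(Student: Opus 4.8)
The plan is to realise the theorem as the coincidence of two $p$-operator structures on $\fW^*$, obtained from the two completely contractive identity maps $\id\colon\max_{L^p}\fW^*\to(\min\fW)^*$ and $\id\colon(\min\fW)^*\to\max_{L^p}\fW^*$. The first is the easy half: by \cite[Thm.\ 4.3]{daws} the dual $\fW^*=(\min\fW)^*$ acts on $L^p$, say through a complete isometry $\rho\colon\fW^*\to\fB(L^p(\phi))$, and as $\rho$ is in particular a Banach-space contraction into $\fB(L^p(\phi))$ it is one of the maps competing in the supremum defining $\norm{\cdot}_{\max_{L^p}}$. Hence $\norm{v}_{(\min\fW)^*}=\norm{\rho^{(n)}(v)}\leq\norm{v}_{\max_{L^p}\fW^*}$ for all $v$ in $\mat_n(\fW^*)$, and the entire content of the theorem becomes the reverse inequality, namely that $\fW^*$ is of maximal type.

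To prove maximality I would reduce to the commutative base case $\fW=\fC(\Ome)$, with $\Ome$ compact. Taking $\Ome$ to be the weak*-compact unit ball of $\fW^*$, the map (\ref{eq:minembedding}) embeds $\min\fW$ completely isometrically into $\min\fC(\Ome)$; dualising yields a complete quotient map $M(\Ome)=(\min\fC(\Ome))^*\to\fW^*$, and Proposition \ref{prop:maximal}(ii) then transports maximal type from $M(\Ome)$ to $\fW^*$. It therefore suffices to prove that $M(\Ome)=(\min\fC(\Ome))^*$ is of maximal type; equivalently, as noted in the discussion preceding the statement, that $(\min\fC(\Ome))^{**}=\min(\fC(\Ome)^{**})$ completely isometrically, the right-hand side being a genuine minimal space since $\fC(\Ome)^{**}$ is a commutative C*-algebra.

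For this base case I would use the tensor reformulation of maximal type in Corollary \ref{cor:maxtp}: it is enough to show $M(\Ome)\what{\otimes}^p\fN(\ell^p(n))=M(\Ome)\otimes^\gam\fN(\ell^p(n))$ isometrically for each $n$. One inequality is free, since $\what{\otimes}^p$ is a reasonable cross-norm and hence dominated by the Banach projective norm $\otimes^\gam$. The reverse inequality $\norm{\cdot}_{\otimes^\gam}\leq\norm{\cdot}_{\what{\otimes}^p}$ is where the vector-measure preliminaries enter: the identification (\ref{eq:rnp}), applied to the finite-dimensional (hence Radon--Nikodym) space $E=\fN(\ell^p(n))$, presents $M(\Ome)\otimes^\gam\fN(\ell^p(n))$ concretely as the space $M(\Ome,\fN(\ell^p(n)))$ of $\fN(\ell^p(n))$-valued measures under the variation norm. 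Dually, from
\[
\mat_n\bigl((\min\fC(\Ome))^{**}\bigr)\cong\fC\fB_p\bigl(M(\Ome),\fB(\ell^p(n))\bigr)\cong\bigl(M(\Ome)\what{\otimes}^p\fN(\ell^p(n))\bigr)^*,
\]
obtained via \cite[Prop.\ 4.9]{daws} and the trace duality $\fN(\ell^p(n))^*\cong\fB(\ell^p(n))$, the task is to show that every bounded $\pi\colon M(\Ome)\to\fB(\ell^p(n))$ is completely bounded with $\pcbnorm{\pi}\leq\norm{\pi}$; I would verify this by representing the elements of $\mat_k(M(\Ome))$ as matrix-valued measures, writing the amplification $\pi^{(k)}$ as integration against these, and estimating the resulting matrix norm by $\norm{\pi}$ times the variation.

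The step I expect to be the real obstacle is precisely this lower bound, i.e.\ the inequality $\norm{\cdot}_{\otimes^\gam}\leq\norm{\cdot}_{\what{\otimes}^p}$ on $M(\Ome)\otimes\fN(\ell^p(n))$. The difficulty is that the functionals detecting the variation norm of a vector measure need not be weak*-continuous, so one cannot simply pair against continuous matrix-valued functions on $\Ome$; it is exactly the Radon--Nikodym property of $\fN(\ell^p(n))$ that disciplines this and lets the variation be recovered from completely contractive data. A secondary point to be checked is that the dual of the minimal embedding is genuinely a \emph{complete} quotient map, since a general $p$-operator Hahn--Banach extension into $\fB(\ell^p(n))$ is unavailable for $p\neq2$; here one must exploit the minimality of the domain. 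Once the base identity is secured, the remaining reductions are formal.
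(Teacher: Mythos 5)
Your skeleton coincides with the paper's: the easy inequality via \cite[Thm.\ 4.3]{daws}, the reduction of general $\fW$ to $\fC(\Ome)$ by dualising the embedding (\ref{eq:minembedding}) and invoking Proposition \ref{prop:maximal}(ii), the reformulation of the base case as condition (iii') of Corollary \ref{cor:maxtp}, and the use of (\ref{eq:rnp}). The gap is in the one step carrying all the analytic content: your plan to prove that every contraction $\pi\colon M(\Ome)\to\fB(\ell^p(n))$ satisfies $\pcbnorm{\pi}\leq\norm{\pi}$ by ``estimating the resulting matrix norm by $\norm{\pi}$ times the variation.'' An estimate of the form $\norm{\pi^{(k)}(G)}\leq\norm{\pi}\,|G|(\Ome)$ is indeed valid, but it bounds the wrong quantity. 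The norm of $G=[\gam_{ij}]$ in $\mat_k(M(\Ome))=\mat_k\bigl((\min\fC(\Ome))^*\bigr)\cong\fC\fB_p\bigl(\min\fC(\Ome),\fB(\ell^p(k))\bigr)$ is the completely bounded norm of the operator $\tilde{G}(f)=\bigl[\int_\Ome f\,d\gam_{ij}\bigr]$, and this is dominated by --- and in general strictly, even unboundedly, smaller than --- the variation $|G|(\Ome)$. Concretely, take $\Ome=\{1,\dots,k\}$ and $G=[\gam_{ij}]$ with $\gam_{ii}$ the point mass at $i$ and $\gam_{ij}=0$ for $i\neq j$: then $\tilde{G}$ is the multiplication representation of $\ell^\infty(k)$ on $\ell^p(k)$, so $\norm{G}_{\mat_k(M(\Ome))}=1$ (Theorem \ref{theo:Linfty} in its simplest instance), while $|G|(\Ome)=k$ for any reasonable matrix norm. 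Hence bounding $\norm{\pi^{(k)}(G)}$ by $\norm{\pi}$ times the variation can never yield $\pcbnorm{\pi}\leq\norm{\pi}$; what is needed is a reverse-type control, namely that the variation norm (equivalently, by (\ref{eq:rnp}), the $\otimes^\gam$-norm) on $M(\Ome)\otimes\fN(\ell^p(n))$ is dominated by completely bounded data --- which is precisely the statement you are trying to prove, so the argument as sketched is circular.

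The ingredients that break this circle in the paper, and which are absent from your proposal, are two dualities: Singer's theorem \cite{singer1,singer2}, identifying $\fC(\Ome,\fB(\ell^p(n)))^*$ isometrically with $M(\Ome,\fN(\ell^p(n)))$ in the \emph{variation} norm, and \cite[Theo.\ 3.6]{anleeruan}, identifying $\bigl(\fV\check{\otimes}^p\fB(\ell^p(n))\bigr)^*$ with $\fV^*\what{\otimes}^p\fN(\ell^p(n))$. Dualising the identification (\ref{eq:cbf}), $\mat_n(\min\fC(\Ome))\cong\fC(\Ome,\fB(\ell^p(n)))$, through these two theorems gives $(\min\fC(\Ome))^*\what{\otimes}^p\fN(\ell^p(n))\cong M(\Ome,\fN(\ell^p(n)))$ isometrically, and only \emph{then} does the Radon--Nikodym property enter, via (\ref{eq:rnp}), to replace the right-hand side by $M(\Ome)\otimes^\gam\fN(\ell^p(n))$, which is condition (iii') of Corollary \ref{cor:maxtp}. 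In your sketch the RNP is asked to do this bridging work (``it is exactly the Radon--Nikodym property \dots that lets the variation be recovered from completely contractive data''), but the RNP identification is a purely Banach-space statement about vector measures and carries no information whatsoever about $p$-completely bounded norms; without Singer's theorem and the An--Lee--Ruan duality, the link between the dual structure of $\min\fC(\Ome)$ and the variation norm is never established. (Your secondary worry --- that the dual of the inclusion $\min\fW\subset\min\fC(\Ome)$ must be a genuine complete quotient map despite the failure of Hahn--Banach extension for $p\neq 2$ --- is legitimate, but the paper makes the same assertion at the same point, so this is not where your proposal diverges from it.)
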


\begin{proof}
We begin with $\min\fC(\Ome)$ for a compact space.  From the formula
$\fV\check{\otimes}^p\fB(\ell^p(n))$ $\cong\mat_n(\fV)$ on one hand, 
and then (\ref{eq:cbf}) on the other,
we obtain for each $n$, isometric identifications
\[
\min\fC(\Ome)\check{\otimes}^p\fB(\ell^p(n))
\cong \mat_n(\min\fC(\Ome))\cong\fC(\Ome,\fB(\ell^p(n))).
\]
Taking duals, we have from \cite[Theo.\ 3.6]{anleeruan} on one hand, and 
\cite{singer1,singer2} (or see \cite[p.\ 182]{diesteluhl}) on the other, that
\[
(\min\fC(\Ome))^*\what{\otimes}^p\fN(\ell^p(n))\cong
M(\Ome,\fN(\ell^p(n))).
\]
Thanks to the fact that finite dimensional spaces enjoy the Radon-Nikodym property, we
can use (\ref{eq:rnp}) on the right hand side of the above identification to see that
\[
(\min\fC(\Ome))^*\what{\otimes}^p\fN(\ell^p(n))=M(\Ome)\otimes^\gam\fN(\ell^p(n))
\]
isometrically for each $n$.  By Corollary \ref{cor:maxtp} we see that
$M(\Ome)$, in is capacity as the dual of $\min\fC(\Ome)$, admits a maximal type
$p$-operator space structure.  Since this is a dual space, it follows \cite[Thm.\ 4.3]{daws}
that this is the maximal structure on $L^p$.

Now we consider $\min\fW\til{\subset}\min\fC(\Ome)$ where
$\Ome$ is the unit ball of $\fW^*$ with weak* topology.
Hence $\fW^*\cong \max_{L^p} M(\Ome)/\{\nu:\langle\nu,w\rangle=0\text{ for }
w\in\fW\}$ completely isometrically.  By (iii) of Proposition \ref{prop:maximal}, $\fW^*$
is of $p$-maximal type.  But by \cite[Thm.\ 4.3]{daws}, $\fW^*$ acts on some $L^p$,
hence the operator space structure is $\max_{L^p}$.
\end{proof}

We observe that it is an immediate consequence of Theorem \ref{theo:dualmin} and
Proposition \ref{prop:maximal} (i), that
\begin{equation}\label{eq:mindd}
(\min\fV)^{**}=\min\fV^{**}
\end{equation}
completely isometrically.  

As another consequence we see that for any Banach space $\fX$ and 
$\fY$
\begin{equation}\label{eq:injtenminspaces}
\min\fX\check{\otimes}^p\min\fY=\min(\fX\otimes^\lambda\fY)
\end{equation}
completely isometrically.  Indeed, we have from Lemma \ref{lem:maximal} (ii)
and (\ref{eq:minstructure}), that 
\begin{align*}
\mat_n(\fC\fB_p(\max\fX^*,\min\fY))&\cong\fC\fB_p(\max\fX^*,\mat_n(\min\fY)) \\
&=\fB(\fX^*,\mat_n(\min\fY))\cong\fB(\fX^*,\fY\otimes^\lam\fB(\ell^p(n)))
\end{align*}
isometrically. Thus the embedding of $\mat_n(\fX\otimes\fY)\cong\fX\otimes\fY\otimes\fB(\ell^p(n))$
into the space above establishes that
\[
\mat_n(\min\fX\check{\otimes}^p\min\fY)
=\fX\otimes^\lam\fY\otimes^\lam\fB(\ell^p(n))
\]
isometrically, for each $n$.  Then (\ref{eq:injtenminspaces}) follows from 
(\ref{eq:minstructure}).







\subsection{$L^1$ spaces}\label{ssec:Lone}
Spaces $L^1(\mu)$, for a decomposable measure $\mu$, are the most natural
class of maximal $p$-operator spaces.

\begin{theorem}\label{theo:ellone}
The operator space structure on $L^1(\mu)$, as a subspace of
\linebreak $(\min L^\infty(\mu))^*$, is the maximal structure on $L^p$, i.e.\
$\max_{L^p}L^1(\mu)$.
\end{theorem}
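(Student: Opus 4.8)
The plan is to derive this from Theorem \ref{theo:dualmin} together with Theorem \ref{theo:Linfty}. The key observation is that $L^1(\mu)$ sits inside $(L^\infty(\mu))^* = (\min L^\infty(\mu))^*$, and by Theorem \ref{theo:Linfty} the space $L^\infty(\mu)$, acting by multiplication on $L^p(\mu)$, carries its minimal $p$-operator space structure. Thus $(\min L^\infty(\mu))^*$ acquires the structure $\max_{L^p}L^\infty(\mu)^*$ by Theorem \ref{theo:dualmin}. The task is then to show that the $p$-operator space structure that $L^1(\mu)$ inherits as a \emph{subspace} of this maximal dual space is itself $\max_{L^p}L^1(\mu)$.

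First I would record that, since $\mu$ is decomposable, we have the Banach space isometric identification $L^1(\mu)^* \cong L^\infty(\mu)$, so $L^1(\mu)$ embeds isometrically into $L^\infty(\mu)^* = (\min L^\infty(\mu))^*$ via the canonical map $\kappa$. By Theorem \ref{theo:dualmin}, $(\min L^\infty(\mu))^* = \max_{L^p}L^\infty(\mu)^*$, and $L^1(\mu)$ is a (weak*-dense, norm-closed) subspace of this. The structure $L^1(\mu)$ inherits as a subspace is, a priori, only dominated by $\max_{L^p}L^1(\mu)$, since $\max_{L^p}$ is the largest structure acting on $L^p$. So the real work is the reverse inequality: I would need to show that on $\mat_n(L^1(\mu))$ the subspace norm dominates $\norm{\cdot}_{\max_{L^p}}$.

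The natural route is duality combined with the expectation from Lemma \ref{lem:commutant}. By Proposition \ref{prop:maximal}(i), to identify the maximal-type structure on $L^1(\mu)$ it suffices to control its dual, and $L^1(\mu)^* = L^\infty(\mu)$ with its minimal structure is exactly what Theorem \ref{theo:Linfty} provides. Concretely, I would argue that $L^1(\mu)$ is a complete quotient of a maximal type space: the completely contractive expectation $E:\fB(L^p(\mu))\to M_\mu(L^\infty(\mu))$ of Lemma \ref{lem:commutant}(ii) predualizes. Its pre-adjoint $E_*$ should map the predual of $\fB(L^p(\mu))$ (or a natural maximal-type space built from $L^p(\mu)\what{\otimes}^pL^{p'}(\mu)$) completely quotiently onto $L^1(\mu)$, and the source is of maximal type. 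Then Proposition \ref{prop:maximal}(ii) forces $L^1(\mu)$ to be of maximal type. Finally, since $L^1(\mu)$ acts on $L^p$ (it is itself an $\fS\fQ_p$-space, indeed $L^1(\mu)\subset L^p$-type considerations, or simply because it is a dual-predual space to which \cite[Thm.\ 4.3]{daws} applies), the maximal-type structure coincides with $\max_{L^p}$, giving the claim.

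The main obstacle I expect is making the quotient/predual argument precise: I must exhibit a concrete completely contractive surjection from an explicitly maximal-type space onto $L^1(\mu)$ whose quotient structure matches the subspace structure inherited from $(\min L^\infty(\mu))^*$. The subtlety is that "subspace of a maximal space" and "quotient of a maximal space" are different operations, and I need the inherited \emph{subspace} structure to be the maximal one. I would resolve this by using the weak*-continuous expectation $E$ to realize $L^1(\mu)$ as a weak*-closed complemented piece, so that the subspace structure from the dual side and the quotient structure from the predual side agree; the completely contractive and completely co-contractive nature of $E$ (from Lemma \ref{lem:commutant}(ii)) is exactly what ties the two descriptions together, after which Proposition \ref{prop:maximal} closes the argument.
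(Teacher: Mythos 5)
Your setup is sound: the subspace structure on $L^1(\mu)$ inherited from $(\min L^\infty(\mu))^*$ acts on $L^p$, so it is automatically dominated by $\max_{L^p}L^1(\mu)$, and the whole content of the theorem is the reverse domination (maximal type). But the mechanism you propose for that reverse inequality fails at two points. First, the expectation $E$ of Lemma \ref{lem:commutant}(ii) does not ``predualize'': it is constructed from an invariant mean on $U^\infty(\mu)$, and such averaged expectations are not weak*-continuous. Indeed, for $p=2$ and $\mu$ Lebesgue measure this is the classical fact (Kadison--Singer) that there is \emph{no} normal conditional expectation of $\fB(L^2)$ onto a continuous masa; nothing improves for general $p$. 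So the pre-adjoint $E_*$ on which your argument (and your final paragraph's ``weak*-continuous expectation'') rests does not exist. Second, even granting the complete quotient map $\fN(L^p(\mu))\to L^1(\mu)$ --- which the paper does establish, but as Corollary \ref{cor:maxLonequot}, \emph{deduced from} the theorem --- Proposition \ref{prop:maximal}(ii) cannot be applied: $\fN(L^p(\mu))$ with its canonical structure is not of maximal type, since by Proposition \ref{prop:maximal}(i) a maximal-type space has minimal dual, whereas $\fN(L^p(\mu))^*=\fB(L^p(\mu))$ is certainly not a minimal $p$-operator space once $\dim L^p(\mu)\geq 2$. Putting the $\max_{L^p}$ structure on the source instead makes it maximal, but then you must show the induced quotient structure coincides with the inherited subspace structure --- which is precisely the theorem, and the broken expectation argument was your only tool for it. A third, smaller slip: you invoke Proposition \ref{prop:maximal}(i) in the converse direction (``minimal dual implies maximal type''); the paper proves only the forward implication, and the converse requires an additional argument.

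The paper's proof avoids all of this with a direct adjoint argument, which is also the correct way to repair your duality idea. Given a bounded map $S:L^1(\mu)\to\fV$ with $\fV$ acting on $L^p$, one first establishes the completely isometric identification $L^1(\mu)^*\cong\min L^\infty(\mu)$ (equation (\ref{eq:Lonedual})); this is where Theorem \ref{theo:Linfty} enters, together with the weak*-closedness of $M_\mu(L^\infty(\mu))$ from Lemma \ref{lem:commutant}(i) and \cite[Prop.\ 5.5]{daws} --- note that only part (i) of Lemma \ref{lem:commutant} is used, not the expectation of part (ii). Then $S^*$ maps into a \emph{minimal} space, hence is automatically completely bounded with $\pcbnorm{S^*}=\norm{S}$ by Proposition \ref{prop:minspace}(ii); writing $S=S^{**}\circ\kappa_{L^1(\mu)}$, using $\pcbnorm{S^{**}}\leq\pcbnorm{S^*}$ (\cite[Lem.\ 4.5]{daws}) and the fact that $\kappa_\fV$ is a complete isometry because $\fV$ acts on $L^p$ (\cite[Prop.\ 4.4]{daws}), one gets $\pcbnorm{S}\leq\norm{S}$. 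Lemma \ref{lem:maximal} then yields maximal type, and since the subspace structure acts on $L^p$ (\cite[Thm.\ 4.3]{daws}), it equals $\max_{L^p}L^1(\mu)$. In short: the passage from ``the dual is minimal'' to ``the space is of maximal type'' is done through adjoints of arbitrary bounded maps, not through a quotient realization, and Theorem \ref{theo:dualmin} is not needed at all.
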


\begin{proof}
We will establish that with the operator space structure given by 
$L^1(\mu)\hookrightarrow(\min L^\infty(\mu))^*$, we have
$\fC\fB_p(L^1(\mu),\fV)=\fB(L^1(\mu),\fV)$ isometrically,
for any $p$ operator space $\fV$ acting on some $L^p$.
By Lemma \ref{lem:maximal}, this implies that $L^1(\mu)$
is of maximal type.  However, since $(\min L^\infty(\mu))^*$
acts on $L^p$ (\cite[Thm.\ 4.3]{daws}), this is the $\max_{L^p}$
structure.

The assumption that $\fV$ acts
on $L^p$ implies that the embedding $\kappa_\fV:\fV\to\fV^{**}$
is a complete isometry (\cite[Prop.\ 4.4]{daws}).  We also note that 
\begin{equation}\label{eq:Lonedual}
L^1(\mu)^*\cong \min L^\infty(\mu)\text{ completely isometrically.}
\end{equation}
Indeed, as noted in \cite[Prop.\ 1.6.13]{lee}, it is sufficient, by virtue
of \cite[Prop.\ 5.5]{daws} to observe that $\min L^\infty(\mu)\cong
M_\mu(L^\infty(\mu))$ is weak* closed.  This was shown in Lemma
\ref{lem:commutant}.

We consider, first, the adjoint $S^*:\fV^*\to L^1(\Ome)^*\cong L^\infty(\Ome)$,
which is completely bounded with $\pcbnorm{S^*}=\norm{S^*}=\norm{S}$
by Proposition \ref{prop:minspace} (ii).  We then have that
$S=S^{**}\circ\kappa_{L^1(\mu)}:L^1(\mu)\to\kappa_\fV(\fV)\cong\fV$
satisfies $\pcbnorm{S}\leq\pcbnorm{S^{**}}$, which, by \cite[Lem.\ 4.5]{daws},
is no greater than $\pcbnorm{S^*}=\norm{S}$.
\end{proof}

The following is an immediate consequence of Lemma \ref{lem:commutant}
and \cite[Prop.\ 5.6]{daws}.

\begin{corollary}\label{cor:maxLonequot}
The map $\eta\otimes\xi\mapsto \eta\xi$ extends to a complete
quotient map from $\fN(L^p(\mu))=L^{p'}(\mu)\otimes^\gam L^p(\mu)$
onto $\max_{L^p}L^1(\mu)$.
\end{corollary}

We obtain the following useful tensor product formulas.  If $\fV$ is a $p$-operator space,
Corollary \ref{cor:maxtp} provides the isometric identifications
\[
{\max}_{L^p}L^1(\mu)\what{\otimes}^p\fV=L^1(\mu)\otimes^\gam\fV\cong L^1(\mu,\fV).
\]
Also we obtain a completely isometric identification
\begin{equation}\label{eq:Lonetp}
{\max}_{L^p}L^1(\mu)\what{\otimes}^p{\max}_{L^p}L^1(\nu)
={\max}_{L^p}(L^1(\mu)\otimes^\gam L^1(\nu))\cong{\max}_{L^p}L^1(\mu\times\nu).
\end{equation}
Indeed we have an isometric identification $\max_{L^p}L^1(\mu)\what{\otimes}^p\max_{L^p}L^1(\nu)
=L^1(\mu)\otimes^\gam L^1(\nu)\cong L^1(\mu\times\nu)$.  The first space has dual
$\min L^\infty(\mu)\overline{\otimes}_F \min L^\infty(\nu)$ (Fubini product) in
$\fB(L^p(\mu)\otimes^p L^p(\nu))$ by \cite[Thm.\ 6.3]{daws}, while the third has dual
$L^\infty(\mu\times\nu)$.   The latter space acts as multiplication opreators on
$L^p(\mu\times\nu)\cong L^p(\nu)\otimes^p L^p(\nu)$.  This dual identification
shows that $\min L^\infty(\mu)\overline{\otimes}_F \min L^\infty(\nu)\cong\min L^\infty(\mu\times\nu)$.
Hence (\ref{eq:Lonetp}) follows.

\medskip\noindent
{\bf Acknowledgements.}  The authors are grateful to M.\ Daws and J.-J.\ Lee 
for comments and corrections, as well as to M.\ Junge for bringing \cite{junge}
to their attention.



Addresses:
\linebreak
 {\sc 
 Istanbul University, Faculty of Science, Department of Mathematics, 34134 Vezneciler, Istanbul, Turkey. \\
Department of Pure Mathematics, University of Waterloo,
Waterloo, ON\quad N2L 3G1, Canada.}

\medskip
Email-adresses:
\linebreak
{\tt oztops@istanbul.edu.tr}
\linebreak {\tt nspronk@uwaterloo.ca}

\end{document}